\definecolor {processblue}{cmyk}{0.96,0,0,0}
\definecolor {darkred}{RGB}{190,11,0}
\tikzset{mycolor/.style = {line width=1bp,color=#1}}%
\tikzset{myfillcolor/.style = {draw,fill=#1}}%
\newlength\mylen
\tikzset{
	bicolor/.style 2 args={
		dashed,dash pattern=on 20pt off 20pt,-,#1,
		postaction={draw,dashed,dash pattern=on 20pt off 20pt,-,#2,dash phase=20pt}
	},
}
\NewDocumentCommand{\fhighlight}{O{blue!40} m m}{%
	\draw[myfillcolor=#1] (#2.north west)rectangle (#3.south east);
}
\tikzset{%
	highlight/.style={rectangle,rounded corners,fill=red!15,draw,
		fill opacity=0.5,thick,inner sep=0pt}
}
\DeclarePairedDelimiter\floor{\lfloor}{\rfloor}
\newtheorem{thm}{Theorem}[section]
\newtheorem{prop}[thm]{Proposition}
\newtheorem{defn}{Definition}[section]
\newtheorem{rem}{Remark}[section]
\newcommand{\N}{\mathbb{N}}
\newcommand{\R}{\mathbb{R}}
\newcommand{\diag}{\mathop{\mathrm{diag}}}\renewcommand{\i}{\textup{i}}\newcommand{\bfzero}{\mathbf 0}\newcommand{\bfuno}{\mathbf 1}
\newcommand{\bfnn}{{\boldsymbol n}}\newcommand{\bfff}{{\boldsymbol f}}\newcommand{\bftt}{{\boldsymbol t}}\newcommand{\bfii}{{\boldsymbol i}}\newcommand{\bfjj}{{\boldsymbol j}}\newcommand{\bfkk}{{\boldsymbol k}}\newcommand{\bfhh}{{\boldsymbol h}}\newcommand{\bfww}{{\boldsymbol w}}\newcommand{\bfxx}{{\boldsymbol x}}
\newcommand{\bftheta}{{\boldsymbol\theta}}
\newcommand{\bfeta}{{\boldsymbol\eta}}
\newcommand*\quot[2]{{^{\textstyle #1}\big/_{\textstyle #2}}}
\DeclareMathSymbol{\shortminus}{\mathbin}{AMSa}{"39}
\newcommand{\blue}{\textcolor{blue}}
\newcommand{\subjclass}[2][1991]{%
	\let\@oldtitle\@title%
	\gdef\@title{\@oldtitle\footnotetext{#1 \emph{Mathematics subject classification.} #2}}%
}
\newcommand{\keywords}[1]{%
	\let\@@oldtitle\@title%
	\gdef\@title{\@@oldtitle\footnotetext{\emph{Key words and phrases.} #1.}}%
}
\begin{document}
\title{Asymptotic spectra of large (grid) graphs with a uniform local structure (part II): numerical applications}
\author{
Andrea Adriani $^{\dagger, (a)}$, Davide Bianchi $^{\dagger, (b)}$, Paola Ferrari $^{\dagger, (c)}$\\ Stefano Serra-Capizzano $^{\dagger, (c,d)}$}
\keywords{graphs; graph Laplacian; asymptotic spectra; partial differential equations; Finite Differences; Finite Elements; Isogeometric Analysis; multigrid}
\subjclass[2020]{05C22; 05C50; 65M55; 65N22; 65J10; 15A06}
\date{}

\maketitle

\begin{abstract}
In the current work we are concerned with sequences of graphs having a grid geometry,
with a uniform local structure in a bounded domain $\Omega\subset {\mathbb
	R}^d$, $d\ge 1$. When
$\Omega=[0,1]$, such graphs include the standard Toeplitz graphs
and, for $\Omega=[0,1]^d$,  the considered class includes
$d$-level Toeplitz graphs. In the general case, the underlying
sequence of adjacency matrices has a canonical eigenvalue
distribution, in the Weyl sense, and it has been shown in the theoretical part of this work that we can associate
to it a symbol $\boldsymbol{\mathfrak{f}}$. The knowledge of the symbol and of its basic analytical features provides key information on the eigenvalue
structure in terms of  localization, spectral gap, clustering, and global distribution.
In the present paper, many different applications are discussed and various numerical examples are presented in order to underline the practical use of the developed theory. Tests and applications are mainly obtained from the
approximation of differential operators via numerical schemes such as Finite Differences (FDs), Finite Elements (FEs), and Isogeometric Analysis (IgA). Moreover, we show that more applications can be taken into account, since the results presented here can be applied as well to study the spectral properties of adjacency matrices and Laplacian operators of general large graphs and networks, whenever the involved matrices enjoy a uniform local structure.
\end{abstract}
\noindent
$\dagger$ All authors have contributed equally.\\
(a)\, Department of Theoretical and Applied Sciences, University of Insubria, Via Dunant 3, 21100 Varese, Italy.\\
(aadriani@uninsubria.it)\\
(b) \, School of Science,   Harbin Institute of Technology (Shenzhen), University Town of Shenzhen, 518055 Shenzhen, China.\\
(bianchi@hit.edu.cn)\\
(c) \, Department of Science and High Technology,   University of Insubria,  Via Valleggio 11, 22100 Como, Italy.\\
(pferrari@uninsubria.it, stefano.serrac@uninsubria.it)\\
(d)\, Department of Information Technology, Uppsala University, Uppsala, Sweeden \\ (stefano.serra@it.uu.se)\\

\section{Introduction}


In \cite{twin-theory} a large class of (sequences of) graphs has been defined, according to the following structural properties:
\begin{description}
\item[a)] when we
look at them from ``far away'', they should reconstruct
approximately a given domain $\Omega\subset [0,1]^d$, $d\ge 1$,
i.e., the larger is the number of the nodes the more accurate is
the reconstruction of $\Omega$;
\item[b)] when we look at them
``locally'', that is from a generic internal node, we want that
the structure is uniform, i.e., we should be unable to understand
where we are in the graphs, except possibly when the considered
node is close enough to the boundaries of $\Omega$.
\end{description}

In particular, the domain $\Omega$ and the internal structure of the graphs in the sequence are
fixed, independently of the index (or multi-index) of the graph
uniquely related to the cardinality of nodes: it follows that the resulting
sequence of graphs has a grid geometry, with a uniform local
structure, in a bounded domain $\Omega\subset {\mathbb R}^d$, $d\ge 1$.  The domain $\Omega$ is assumed to be Lebesgue measurable with regular boundary, that is a boundary $\partial \Omega$ of zero Lebesgue measure,
and it is supposed to be contained in the hypercube $[0,1]^d$. Such a domain is called {\itshape regular}. When $\Omega=[0,1]$, it has been proven in \cite{twin-theory} that such graphs include the standard
Toeplitz graphs (see \cite{Ghorban2012}) and for $\Omega=[0,1]^d$  the
considered class includes $d$-level Toeplitz graphs.

Moreover, given a sequence of graphs having  a grid geometry with a uniform local structure in a domain $\Omega$, the underlying sequence
of adjacency matrices shows a canonical eigenvalue distribution, in the Weyl sense (see \cite{BS,GS} and references therein), and it is possible to associate to it a symbol function $\boldsymbol{\mathfrak{f}}$. More precisely, when $\boldsymbol{\mathfrak{f}}$ is smooth enough, if $N$ denotes the size of the adjacency matrix (i.e. the number of nodes of the graph), then the eigenvalues of the adjacency matrix are approximately values
of a uniform sampling of $\boldsymbol{\mathfrak{f}}$ in its definition domain, which
depends on $\Omega$ (see Definition \ref{def:eig-distribution} for the formal definition of eigenvalue distribution in the Weyl sense; the results in Section 5 of \cite{twin-theory} for the precise characterization of $\boldsymbol{\mathfrak{f}}$ and of its definition domain; and \cite{Barbarino-Bianchi-Garoni} for the definition of its monotone rearrangement).

 The knowledge of the symbol and of some of its basic analytical features provides a lot of information on the eigenvalue structure, of localization, spectral gap, clustering, and distribution type.

The mathematical tools are mainly taken from the field of Toeplitz structures (see \cite{BS} and \cite{GS,Tillinota,tyrtL1}) and of Generalized Locally Toeplitz (GLT) matrix-sequences (see  \cite{glt,glt-bis,Tilliloc}): for a recent account on the GLT theory and for several useful applications in the approximation of partial differential and fractional differential operators see \cite{Bianchi,BS18,DMS,GMS18,glt-book-1,glt-book-2,glt-book-3,glt-book-4,glt-book-3-old} and references therein.

Since in this paper we are interested in numerical applications of the theory developed in \cite{twin-theory}, we first show that many numerical schemes (see e.g. \cite{FE-book,IgA-book,FD}) for
 approximating partial differential equations (PDEs) and operators lead
 to sequences of structured matrices, which can be written as linear combination  of adjacency matrices, associated with sequences of graphs showing a uniform local structure. More specifically, if the physical domain of the differential
operator is $[0,1]^d$ (or any $d$-dimensional rectangle) and the coefficients are constant, then
we encounter $d$-level (weighted) Toeplitz graphs, when approximating the underlying PDE by using e.g. equispaced Finite Differences (FDs) or uniform Isogeometric Analysis (IgA). On the other hand, under the same assumptions on the underlying operator, quadrangular  and triangular Finite Elements (FEs) lead to block $d$-level Toeplitz structures, where the size of the blocks is related to the degree of the polynomial space of approximation and to the dimensionality $d$ (see \cite{FEM-paper}). Finally, in more generality, the GLT case is encountered by using any of the above numerical techniques, also with non-equispaced nodes/triangulations, when dealing either with a general domain $\Omega$ or when the coefficients of the differential operator are not constant. The given classification of approximated PDE matrix-sequences is relevant also because the obtained spectral information can be used for guiding the design of proper iterative solvers (in terms either of preconditioners or of ad hoc multigrid methods) for the underlying linear systems with large matrix size. In this direction, we provide a series of numerical examples where the knowledge of the symbol allows to obtain preconditioners for the conjugate gradient method, which significantly improve the computational cost for solving the involved large linear systems, and guides the choice of optimal projectors for two-grid and multigrid methods.

The paper is organized as follows. In Section \ref{sec:intro}  we will
first review some basic definitions and notation from graph theory,
from the field  of Toeplitz and $d$-level Toeplitz graphs, and we then provide the definitions of canonical spectral
distribution, graph Laplacian etc. We also briefly discuss some properties and tools useful in the study of multigrid methods. Section \ref{sec:num} will constitute the main core of our work. Here, a number of numerical applications are studied, highlighting the importance of the subject introduced in \cite{twin-theory} both for the discretization of PDEs and the study of elliptic problems in the discrete settings of weighted graphs. In particular, we numerically prove the relevance of the derived spectral information for building fast iterative solvers and we present various numerical examples to confirm the validity of our derivations. Finally, in Section \ref{sec:conclusion} we draw conclusions and discuss possible directions of research.

\section{Background, notation and classes of locally uniform graphs}\label{sec:intro}

In this section we recall some basics on graph theory (see, for example, the very recent \cite{Keller2021} for a modern exposition) and introduce definitions, notation and several families of graphs with uniform local structure, which will be of interest for our numerical applications. Before doing so, we briefly introduce a multi-index notation that will be used hereafter.\\

For a fixed integer $ d \geq 1 $, a $ d$-index $ \bfkk$ is a vector $ \bfkk=(k_1,...,k_d) \in \mathbb{Z}^d $. Given two $ d$-indices $ \bfii=(i_1,...,i_d) $ and $ \bfjj=(j_1,...,j_d) $, we write $\bfii \vartriangleleft \bfjj$ if $i_r<j_r$ for the first $r=1,2,\ldots,d$ such that $i_r\neq j_r$ and we write $\bfii < \bfjj$ if $i_r<j_r$ is satisfied for every $r=1,\ldots,d$. The relations $\trianglelefteq,\vartriangleright,\trianglerighteq$ and $\leq,>,\geq$ are defined accordingly.
	
We use bold letters for vectors and vector/matrix-valued functions. The notation $\bf{0},\bf{1},\bf{2},\ldots$ is used for the $d$-dimensional constant vectors $\left(0,0,\ldots,0\right)$, $\left(1,1,\ldots,1\right)$, $\left(2,2,\ldots,2\right),\ldots$, respectively. We intend every operation on vectors in $ \mathbb{Z}^d $ computed element-wise, so that, for example, the notation $\frac{\bfii}{\bfnn}$ is used for the vector $\frac{\bfii}{\bfnn}= \left(\frac{i_1}{n_1},\ldots,\frac{i_d}{n_d} \right)$ and $|\bfii|$ is used for the vector $|\bfii|=\left(|i_1|, \ldots, |i_d|\right)$. Finally, given a $d$-index $\bfnn$, we write $\bfnn \to \infty$ if and only if $\min_{r=1,\ldots,d}\{n_r\}\to \infty$.

\subsection{Graphs: basics, notation and $ d$-level diamond Toeplitz graphs}

We start this subsection by recalling some notation and basic information about graphs and subgraphs. After this, we introduce the main structures of graphs which will be of interest for numerical application, namely Toeplitz graphs, $d$-level Toeplitz graphs and $ d $-level diamond Toeplitz graphs (see \cite{twin-theory}).\\

A (possily infinite) \textit{graph} is a quadruple $G=(V,E,w, \kappa)$, where
\begin{itemize}
	\item  $V=\left\{ v_1, v_2,\ldots, v_n,... \right\}$ denotes a countable set of nodes, or vertices;
	\item  $E=\{(v_i,v_j)|\, v_i,v_j\in V, \; \exists \text{ an edge from } v_i \text{ to } v_j\}$ is the set of edges between nodes;
	\item $w: V\times V \to \R$ is the weight function and, given two nodes $ v_i $ and $ v_j $, we have $ w(v_i,v_j) \neq 0 $ if and only if there exists an edge from $ v_i $ to $ v_j $;
	\item  a potential term $\kappa :V \to \R$.
\end{itemize}
We use the notation $ G=(V,E,w) $ whenever the potential term $ \kappa $ vanishes.
The non-zero values of the weight function $w$ are called \textit{weights} associated with the edge $(v_i,v_j)$. A \textit{walk} of length $k$ in $G$ is a set of nodes $v_{i_0}, v_{i_1},\ldots,v_{i_{k-1}}, v_{i_{k}}$ such
that, for all $1\leq r\leq k$, $(v_{i_{r-1}},v_{i_{r}})\in E$. In this paper we work with \textit{undirected} and \textit{connected} graphs, i.e. graphs for which the function $ w $ is symmetric ($w(v_i,v_j)=w(v_j,v_i)$ for every $v_i,v_j  \in V$) and such that for every couple of vertices $v_i, v_j \in V $ there exists a walk from $ v_i $ to $ v_j $. If $ w(v_i,v_j) \neq 0 $ we write $ v_i \sim v_j $ and we say that $ v_i $ and $ v_j $ are \textit{neighbors}. Finally, the \textit{degree} of a node $v_i$ of an undirected graph, denoted by  $\deg(v_i)$, is defined as
\begin{equation*}
\deg(v_i):= \sum_{v_j\sim v_i}w(v_i,v_j).
\end{equation*}


We are now ready to introduce the main structures of graphs we are interested in for our numerical applications. We follow the same notation and the same increasing level of complexity proposed in \cite{twin-theory}, hence starting with the definition of Toeplitz graphs.\\

\begin{defn}[Toeplitz graph]\label{def:toeplitz-graph}
Let $n,m,t_1,\ldots,t_m$ be positive integers such that $0< t_1<t_2<\ldots<t_m< n$, and fix $m$ nonzero real numbers $w_{t_1},\ldots,w_{t_m}$. A \textnormal{Toeplitz graph} of cardinality $ n $ is the graph $T_n\langle (t_1,w_{t_1}),\ldots,(t_m,w_{t_m})\rangle=(V_n,E_n,w) $ such that $V_n=\{v_1,\ldots,v_n\}$ and the weight function $w$ satisfies
$$
w(v_i,v_j) = \begin{cases}
w_{t_k} & \mbox{if } |i-j|=t_k,\\
0 & \mbox{otherwise}.
\end{cases}
$$
\end{defn}

Note that, by construction, the adjacency matrix of a Toeplitz graph, that is the matrix $ W_n=(w(v_i,v_j))_{i,j=1}^n=(w_{|i-j|})_{i,j=1}^n $, is a symmetric Toeplitz matrix. Trivially, there exists a 1-1 correspondence between Toeplitz graphs and (symmetric) Toeplitz matrices.\\

A generalization of Toeplitz graphs is represented by $d$-level Toeplitz graphs. In order to introduce them, we first need to recall the concept of \textit{directions} associated with a $d$-index.\\

Given a $d$-index $\bftt_k=((t_k)_1,\ldots,(t_k)_d)$ such that $\bfzero \trianglelefteq \bftt_k$ and $\bftt_k \neq \bfzero$, the set
$$
[\bftt_k] := \quot{I_k}{\sim}, \qquad \mbox{where } \bfii \sim \bfjj \mbox{ iff } \bfii = \pm \bfjj,
$$
where
$$
I_k := \left\{  \bfii \in \mathbb{Z}^d\, | \,\bfii = \left(\pm (t_k)_1, \ldots, \pm (t_k)_d \right)  \right\},
$$
is called the set of \textit{directions} associated with $\bftt_k$. For $\alpha=1,\ldots,\left|[\bftt_k]\right|$, the elements $[\bftt_k]_{\alpha}\in[\bftt_k]$ are called \textit{directions} and clearly $\left|[\bftt_k]_\alpha\right|=2$. The element in $[\bftt_k]_\alpha$ whose first nonzero component is positive is denoted by $[\bftt_k]_\alpha^+$ and the other one is denoted by $[\bftt_k]_\alpha^-$.

\begin{defn}[$d$-level Toeplitz graph]\label{def:d-toeplitz-graph}
Let $\bfnn,\bftt_1,\ldots,\bftt_m$ be $d$-indices such that $\bf{0}<\bfnn$, $ 0\vartriangleleft\bftt_1\triangleleft \bftt_2 \triangleleft\ldots \triangleleft \bftt_m \triangleleft \bfnn-\bfuno, $
and let $\bfww_1,\ldots, \bfww_m$ be $m$ nonzero real vectors such that $\bfww_k \in \R^{c_k}$, with $c_k=\left|[\bftt_k]\right|$, for every $k=1,\ldots,m$. The components of the vectors $\bfww_k$ are denoted as follows:
$$
\bfww_k = \left( w_{[\bftt_k]_1}, w_{[\bftt_k]_2}, \ldots,  w_{[\bftt_k]_{c_k}} \right).
$$
A $d$-\textnormal{level
Toeplitz graph} $ T_\bfnn\langle \{[\bftt_1],\bfww_1\},\ldots,\{[\bftt_m], \bfww_m\}\rangle $ is an undirected graph with node set given by
$V_\bfnn=\left\{v_{\bfkk}\,|\, \bf{1}\trianglelefteq \bfkk\trianglelefteq\bfnn \right\}$ and whose weight function $\omega$ satisfies
\begin{equation}\label{eq:weight_dToeplitz}
w(v_\bfii,v_\bfjj) = \begin{cases}
w_{[\bftt_k]_\alpha} & \mbox{if } |\bfii-\bfjj|=\bftt_k \mbox{ and } (\bfii-\bfjj) \in [\bftt_k]_\alpha=\left\{ [\bftt_k]_\alpha^+, [\bftt_k]_\alpha^-\right\}\\ & \mbox{for some }\alpha=1,\ldots, c_k;\\
0 & \mbox{otherwise}.
\end{cases}
\end{equation}
\end{defn}

When every node of a $d$-level Toeplitz graph is replaced by a fixed, undirected graph of dimension $ \nu $, which we call \textit{diamond}, the graph assumes a block structure. To properly define this concept, recall that a \emph{linking graph operator} for the reference node set $ [\nu]:=\{1,\ldots,\nu\} $ is any non-zero $ \R^{\nu \times \nu} $ matrix, or, equivalently, the adjacency matrix of a (possibly not undirected) graph. We can then give the following definition.

\begin{defn}[$d$-level diamond Toeplitz graph]\label{def:d-level-diamond}
	Let $d,m,\nu$ be fixed integers and let $G\simeq \left([\nu], E,w\right)$ be a fixed undirected graph which we call \textnormal{mold graph}. 
	
	Let $\bfnn,\bftt_1,\ldots,\bftt_m$ be $d$-indices such that $\bf{0}<\bfnn$, and $0\vartriangleleft\bftt_1\triangleleft \bftt_2 \triangleleft\ldots \triangleleft \bftt_m \triangleleft \bfnn-\bfuno.$ For $k=1,\ldots,m$, let $\boldsymbol{L}_k$ be a collection of linking graph operators for the reference set $[\nu]$ such that $\left|\boldsymbol{L}_k \right|=c_k$, with $c_k=\left|[\bftt_k]\right|$ for every $k=1,\ldots,m$. We indicate the elements of the set $\boldsymbol{L}_k$ by the following index notation,
\begin{align*}
	&\boldsymbol{L}_k = \left\{ L_{[\bftt_k]_1}, L_{[\bftt_k]_2}, \ldots,  L_{[\bftt_k]_{c_k}} \right\},\\ 
	&\R^{\nu\times\nu}\ni L_{[\bftt_k]_\alpha}=\left(l_{[\bftt_k]_\alpha}(r,s)\right)_{r,s=1}^\nu \quad \mbox{for } \alpha=1,\ldots, c_k.
\end{align*}
Finally, consider $\bfnn$ copies $G(\bfkk)\simeq G$ of the mold graph, which we call \textnormal{diamonds}.
	
	A $d$-\textnormal{level diamond
		Toeplitz graph}, denoted by 
	$$
	T_{\bfnn,\nu}^G\left\langle \left\{\bftt_1,\boldsymbol{L}_1\right\},\ldots,\left\{\bftt_m,\boldsymbol{L}_m\right\} \right\rangle,
	$$
	 is an undirected graph with nodes
	$$
	V_\bfnn=\left\{v_{(\bfkk,r)}\,|\, ({\bf{1}},1)\trianglelefteq ({\bfkk}, r)\trianglelefteq (\bfnn,\nu) \right\}
	$$
	and weight function $w_{\bfnn}: V_\bfnn \times V_\bfnn\to \R$ satisfying
	$$
	w_{\bfnn}\left(v_{(\bfii,r)},v_{(\bfjj,s)}\right) := \begin{cases}
	w(r,s) & \mbox{if }\bfii=\bfjj,\\
	l_{[\bftt_k]_\alpha}(r,s) & \mbox{if } |\bfii-\bfjj|=\bftt_k \mbox{ and } (\bfii-\bfjj) = [\bftt_k]_\alpha^+,\\
	l_{[\bftt_k]_\alpha}(s,r) & \mbox{if } |\bfii-\bfjj|=\bftt_k \mbox{ and } (\bfii-\bfjj) = [\bftt_k]_\alpha^-,\\
	0 & \mbox{otherwise}.
	\end{cases}
	$$
\end{defn}

To end this section, we now introduce the main character of our numerical application, namely the graph Laplacian. First, observe that we use the notation $ C(V) $ to denote the set of real-valued functions on $ V $.

\begin{defn}[graph Laplacian]\label{def:graph Laplacian}
Let $G=(V,E,w,\kappa)$ be an undirected graph with no self-loops (i.e. $ w(v_i,v_i)=0 $ for every $ v_i \in V $). The \textnormal{graph Laplacian} is the symmetric matrix $\Delta_G : C(V) \to C(V)$ defined as
\begin{equation*}
\Delta_G := D+K - W,
\end{equation*}
where $D$ is the \textnormal{degree matrix}, $K$ is the \textnormal{potential term matrix}, that is,
$$
D:=\diag\left\{\deg(v_1),\ldots,\deg(v_n)\right\}, \quad K:=\diag\left\{\kappa(v_1),\ldots,\kappa(v_n)\right\},
$$ and $W$ is the adjacency matrix of the graph $G$, that is,
$$
W=\left(
\begin{array}{cccc}
0      &   w(v_1,v_2)  & \cdots & w(v_1,v_n)\\
w(v_1,v_2)    &    0   & \ddots & \vdots \\
\vdots & \ddots & \ddots & w(v_{n-1},v_n) \\
w(v_1,v_n) & \cdots & w(v_{n-1},v_n)     & 0\\
\end{array}
\right).
$$
Therefore, $ \Delta_G $ appears as follows:
\begin{equation*}
\Delta_G= \left(
\begin{array}{cccc}
\deg(v_1)  +\kappa(v_1)     &   -w(v_1,v_2)  & \cdots & -w(v_1,v_n)\\
-w(v_1,v_2)    &    \deg(v_2) + \kappa(v_2)   & \ddots & \vdots \\
\vdots & \ddots & \ddots & -w(v_{n-1},v_n) \\
-w(v_1,v_n) & \cdots & -w(v_{n-1},v_n)     & \deg(v_n)+\kappa(v_n)\\
\end{array}
\right).
\end{equation*}
\end{defn}

\subsection{Matrix-norms and partial ordering of Hermitian matrices}\label{sec:norms}

In this brief subsection, we introduce some notation for matrix-norms which will be employed in the following sections and we define the partial ordering on the subset of $ \mathbb{C}^{n \times n} $ formed by Hermitian matrices.\\
First, given a norm $ \| \cdot \| $ on $ \mathbb{C}^{n} $, it is always possible to consider the induced norm, which will be still denoted by $ \| \cdot \| $, on the space $ \mathbb{C}^{n \times n} $ defined as follows:
\begin{equation*}
\| A \| = \sup\left\{ \| Ax \|, \, x \in \mathbb{C}^n \text{ with } \| x \|=1 \right\} = \sup_{x \neq \bf{0}} \frac{\| Ax \|}{\|x\|}.
\end{equation*}
In our work we will be mainly interested in the case where the considered norm on $ \mathbb{C}^n $ is the p-norm, for some $ p \in [1, \infty] $, which we denote by $ \| \cdot \|_{p} $, if $ p \neq 2 $, and by $ \| \cdot \| $, if $ p =2 $.
In this last particular case the norm is also known as spectral norm and the following equality holds
\begin{equation*}
\| A \| = \sigma_{\max}(A),
\end{equation*}
where $ \sigma_{\max}(A) $ is the largest singular value of $ A $. 

We denote the Frobenius norm of a matrix $ A = (a_{i,j})_{i,j=1}^{n} $ by
\begin{equation*}
\| A \|_{F} = \sqrt{\sum_{i=1}^{n} \sum_{j=1}^{n} |a_{i,j} |^2}.
\end{equation*}
Let now $ A $ be a Hermitian Positive Definite (HPD) matrix. This assumption will be crucial in what follows, especially in Section \ref{sec:num}, for the definition of our iterative methods. In fact, in this special case, it is possible to define an inner product on $ \mathbb{C}^{n} $ as
\begin{equation*}
\left\langle u,v \right\rangle_A = u^T A v,
\end{equation*}
which induces the following norm on $ \mathbb{C}^{n} $
\begin{equation*}
\| u \|_A = \sqrt{\left\langle u,u \right\rangle_A} = \sqrt{u^T A u}.
\end{equation*}
To conclude this subsection, we define the following partial ordering on the subset of $ \mathbb{C}^{n \times n} $ of Hermitian matrices. Given two Hermitian matrices $ A $ and $ B $ we say that $ A \geq B $ (respectively, $ A > B $) if $ A - B $ is Hermitian Positive Semidefinite (HPSD) (respectively, $ A-B $ is HPD).

\subsection{Spectral symbol and generating function of $d$-level diamond Toeplitz graphs}\label{ssec:weyl_distribution}

A matrix-valued function
$\boldsymbol{\mathfrak{f}}:D\to\mathbb C^{\nu\times \nu}$, $\nu\geq 1$, defined on a measurable set
$D\subseteq\mathbb R^m$, $m\in \N$, is measurable (resp. continuous, in
$\textnormal{L}^p(D)$) if its components $\mathfrak{f}_{i,j}:D\to\mathbb C,\ i,j=1,\ldots,
\nu,$ are measurable (resp. continuous, in $\textnormal{L}^p(D)$). Let
$\mu_m$ be the Lebesgue measure on $\mathbb R^m$ and
let $C_c(\mathbb{R})$ be the set of continuous functions with
bounded support defined over $\mathbb{R}$.
We use the notation $ \{ X_{\bfnn,\nu} \} $ to denote a sequence of matrices of increasing dimension $ d_{\bfnn} $, i.e. such that $ d_{\bfnn} \to \infty$ as $ \bfnn \to \infty$, $ \nu $ being a fixed parameter independent of $ \bfnn$.



We say that $\{X_{\bfnn,\nu}\}_\bfnn$ is \textit{zero-distributed} if
\begin{equation}\label{zero-distr-sigma}
\lim_{\bfnn\rightarrow \infty}\frac{1}{d_\bfnn}\sum_{k=1}^{d_\bfnn}F(\sigma_k(X_{\bfnn,\nu})) = F(0) \qquad\forall F\in C_c(\mathbb{R}),
\end{equation}
and we indicate it by $\{X_{\bfnn,\nu}\}_\bfnn\sim_\sigma 0$, where $ \sigma_1(X_{\bfnn,\nu}),\ldots,\sigma_{d_{\bfnn}}(X_{\bfnn,\nu}) $ are the singular value of $ X_{\bfnn,\nu}$, sorted in non-decreasing order.

Of great importance for our numerical applications is the knowledge of the spectral symbol of a sequence of matrices, which we now recall.

\begin{defn}[Spectral symbol]\label{def:eig-distribution}
	Let $\{X_{\bfnn,\nu}\}_\bfnn$ be a sequence of matrices and let $\boldsymbol{\mathfrak{f}}:D\to\mathbb C^{\nu\times \nu}$ be a measurable Hermitian matrix-valued function defined on the measurable set
	$D\subset\mathbb R^m$, with $0<\mu_m(D)<\infty$.
	
	 We	say that $\{X_{\bfnn,\nu}\}_\bfnn$ is distributed like $\boldsymbol{\mathfrak{f}}$ in the sense of
	eigenvalues, in symbols $\{X_{\bfnn,\nu}\}_\bfnn \sim_\lambda \boldsymbol{\mathfrak{f}}$, if
	\begin{equation}\label{distribution:sv-eig}
	\lim_{\bfnn\rightarrow \infty}\frac{1}{d_\bfnn}\sum_{k=1}^{d_\bfnn}F(\lambda_k(X_{\bfnn,\nu}))=\frac1{\mu_m(D)}\int_D \sum_{k=1}^\nu  F\left(\lambda_k(\boldsymbol{\mathfrak{f}}(\boldsymbol{y}))\right)d\mu_m(\boldsymbol{y}),\qquad\forall F\in C_c(\mathbb{R}),
	\end{equation}
	where $\lambda_1(\boldsymbol{\mathfrak{f}}(\boldsymbol{y})), \ldots,\lambda_\nu(\boldsymbol{\mathfrak{f}}(\boldsymbol{y}))$ are the eigenvalues of $\boldsymbol{\mathfrak{f}}(\boldsymbol{y})$ and $ \lambda_1(X_{\bfnn,\nu}),\ldots,\lambda_{d_{\bfnn}}(X_{\bfnn,\nu}) $ are the eigenvalues of $ \{X_{\bfnn,\nu} \}$, sorted in non-decreasing order. 
\end{defn}

It is easy to show that (see \cite[Proposition 4.4]{twin-theory}), given a $ d $-level diamond Toeplitz graph \\$T_{\bfnn,\nu}^G\left\langle \left\{\bftt_1,\boldsymbol{L}_1\right\},\ldots,\left\{\bftt_m,\boldsymbol{L}_m\right\} \right\rangle$, there exists a function $\bfff : [-\pi,\pi]^d \to \mathbb{C}^{\nu\times \nu}$, called the \textit{generating function}, such that the adjacency matrix $ W^G_{\bfnn,\nu} $ of the graph satisfies
$$
(W^G_{\bfnn,\nu})_{\bfii,\bfjj}= \hat{\bfff}_{\bfii-\bfjj}
$$
and, therefore (see \cite{Tillinota}), $ \{ W^G_{\bfnn,\nu} \} \sim_{\lambda} \boldsymbol{\mathfrak{f}} \equiv \bfff $, whenever $ \bfff \in \text{L}^1([-\pi,\pi]^d) $.\\

\subsection{Multigrid Methods for Structured Matrices}\label{ssec:multigrid}

In the current subsection we summarize the key results on symbol-based multigrid methods for Toeplitz structures. Throughout the subsection, we consider the case of an invertible matrix $A_n \in \mathbb{C}^{d_n \times d_n}$ and a vector $b \in \mathbb{C}^{d_n}$.

Multigrid methods (MGM) are iterative procedures for large linear systems that create a proper sequence of linear systems of decreasing dimensions obtained by consecutive projections. First, we focus on a general coefficient matrix $A_{n}\in\mathbb{C}^{d_n \times d_n}$, then we treat in detail the results related to Toeplitz matrices $T_n(f)$.

Given $x_{n},\,b_{n}\in\mathbb{C}^{d_n}$, consider a full-rank matrix $P_{n,k}\in\mathbb{C}^{d_n\times k}$, $k<d_n$,  and let us consider two stationary iterative methods: the method $\mathcal{V}_{n,\rm{pre}}$, with iteration matrix ${V}_{n,\rm{pre}}$, and $\mathcal{V}_{n,\rm{post}}$, with iteration matrix ${V}_{n,\rm{post}}$.

Given an initial guess $x_{n}^{(0)}\in\mathbb{C}^{d_n}$, an iteration of a two-grid method (TGM) is given by the following steps:
\begin{center}
\begin{tabular}{lcl}
     \multicolumn{3}{c} {$x_{n}^{(k+1)}=\mathcal{TGM}(A_n,x_{n}^{(j)},b_n)$} \\ 
     \hline \\
    \fbox{\begin{tabular}{l} 0. $x_{n}^{\rm{pre}}=\mathcal{V}_{n,\rm{pre}}^{\nu_{\rm{pre}}}(A_{n},{b}_{n},x_{n}^{(j)})$ \end{tabular} }  && {Pre-smoothing iterations}\\
    \ \\
   \fbox{\begin{tabular}{l}
          1. $r_{n}=b_{n}-A_{n}x_{n}^{\rm{pre}}$ \\
          2. $r_{k}=P_{n,k}^{H}r_{n}$     \\
          3. $A_{k}=P_{n,k}^{H}A_{n}P_{n,k}$  \\
          4. Solve $A_{k}y_{k}=r_{k}$  \\
          5. $\hat{x}_{n}=x_{n}^{\rm{pre}}+P_{n,k}y_{k}$\\
    \end{tabular}
    } && {Coarse Grid Correction {(CGC)}}\\
    \ \\
    \fbox{\begin{tabular}{l}6. $x_{n}^{(j+1)}=\mathcal{V}_{n,\rm{post}}^{\nu_{\rm{post}}}(A_{n},{b}_{n},\hat{x}_{n})$\end{tabular}} && {Post-smoothing iterations} \\
\end{tabular}
\end{center}

The Coarse Grid Correction (CGC) depends on the grid transfer operator $P_{n,k}$, while step $0.$ and
step $6.$ consist, respectively, in applying $\nu_{\rm{pre}}$ times a pre-smoother
and $\nu_{\rm{post}}$ times a post-smoother of the given iterative methods.

We base the convergence analysis on the Ruge-St\"uben theory  \cite{RStub} for TGM. In particular, we mention the results in \cite[Remark~2.2]{ADS} and \cite[Theorem 5.2]{RStub}, which provide two separate conditions that must be fulfilled in order to have TGM convergence, namely the ``smoothing property'' and the ``approximation property''. We highlight that, if such conditions are satisfied, then the resulting TGM is not only convergent but has also an optimal convergence rate.

The standard V-cycle method is obtained by replacing the direct solution at step 4. with a recursive call of the TGM applied to the coarser linear system $A_{k_{\ell}}y_{k_{\ell}}=r_{k_{\ell}}$, where $\ell$ represents the level. The recursion is usually stopped at level ${\overline{\ell}}$ when $k_{\overline{\ell}}$ becomes small enough for solving cheaply step 4. with a direct solver.

In the case where $A_n=T_n(f)$, an efficient choice of grid transfer operator is given by $P_{n,k}$ obtained as the product between a Toeplitz matrix $T_{n}(p)$, with $p$ trigonometric polynomial, and the cutting matrix $K_{n}$. That is,
\begin{equation}\label{eq:def_projector_pnk}
P_{n,k}=T_{n}(p)K_{n},
\end{equation}
where, if the size $n$ of the coefficient matrix is divisible by a factor $\mathfrak{g}\ge2$, the lower level dimension is reduced to $k=n/\mathfrak{g}$ by the cutting matrix
\begin{equation}\label{eq:def_cutting_matrix}
K_{n}=[\delta_{i-\mathfrak{g}j}]_{i,j}, \quad i = 0, \dots, n -1; \, j = 0,\dots, k - 1,\quad \quad \delta_{\ell}=\begin{cases}
1 \, &{\rm if}\, \ell \equiv 0\, ({\rm mod}\, n),\\
0 \, &{\rm otherwise}
\end{cases}.
\end{equation}

The validation of the approximation property depends on the choice of the generating function $p$, which has to fulfill specific conditions depending on the zeros of $f$.

In the case where $f$ is a $d$-variate function, i.e. $T_\bfnn(f)$ is a $d$-level matrix,  we construct $K_{\bfnn}$ as the tensor product of the cutting matrices $K_{n_1}$, $K_{n_2}$, $\dots$, $K_{n_d}$.

In what follows we provide, in the $d$-level setting, conditions on $p$ in order to obtain a projector which is effective in terms of convergence. For the generalization to the case where $\bfff$ is a $\nu\times \nu$ matrix-valued generating function see \cite{multi-block}.

\begin{thm}[\cite{ADS,DSS,Fiorentino-Serra}]\label{thm:toep_approx_prop}
Consider a Toeplitz matrix $T_\bfnn(f)$  generated by a non-negative trigonometric polynomial $f$. Suppose that $f(\bftheta)$ vanishes at exactly one point $\mathbf{\bftheta_0}$. Then, the optimality of the two-grid method applied to $T_\bfnn(f)$ is guaranteed if we choose a projection operator of the form (\ref{eq:def_projector_pnk}) associated with a generating function $p$ such that
   \begin{equation*}\label{eqn:mgm_approx_cond}
\begin{split}
\underset{ {\bftheta}\to  {\bftheta_0}}{\lim\sup}\frac{|p( {\bfeta})|^2}{f( {\bftheta})}&<\infty, \quad  {\bfeta}\in \mathcal{M}_\mathfrak{g}( {\bftheta}),\\
\sum_{ {\bfeta}\in \Omega( {\bftheta})}p^2( {\bfeta})&>0,
\end{split}
 \end{equation*}
where 
$$\Omega_\mathfrak{g}( {\bftheta})=\left\{\bfeta\left|\eta_j\in\left\{\theta_j+\frac{2\pi k}{\mathfrak{g}}(\mbox{\rm mod } 2\pi)\right\},k=0,\dots,\mathfrak{g}-1,j=1,\dots,d\right.\right\}$$
 and $\mathcal{M}_\mathfrak{g}( {\bftheta})=\Omega_\mathfrak{g}( {\bftheta})\setminus \{\bftheta\}$ are the sets of {corner} and {mirror} points respectively.

%


\end{thm}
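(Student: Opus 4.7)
The plan is to invoke the Ruge-St\"uben convergence theorem for TGM, recalled in the paragraph preceding the statement, which reduces the proof of optimal convergence to the verification of two independent conditions: a \emph{smoothing property} for $\mathcal V_{\bfnn,\mathrm{pre}}$ and $\mathcal V_{\bfnn,\mathrm{post}}$, and an \emph{approximation property} for the coarse grid correction associated with $P_{\bfnn,\bfkk}=T_\bfnn(p)K_\bfnn$. Since $f\ge 0$, $T_\bfnn(f)$ is HPD and the smoothing property for the standard Richardson/Jacobi type relaxations follows from the classical Rayleigh-quotient bound on the iteration matrix in the $T_\bfnn(f)$-norm, independent of the specific form of $f$. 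Hence everything reduces to establishing the approximation property with a constant $\gamma$ independent of $\bfnn$.

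For the approximation property I would first rewrite the Galerkin coarse operator in symbol form. Applying the compression identity $K_\bfnn^H T_\bfnn(g) K_\bfnn = T_\bfkk(g_\mathfrak{g})$ with $g_\mathfrak{g}(\bftheta)=\mathfrak{g}^{-d}\sum_{\bfeta\in\Omega_\mathfrak{g}(\bftheta/\mathfrak{g})} g(\bfeta)$ to $g=|p|^2 f$, together with the product-of-Toeplitz expansion $T_\bfnn(p)T_\bfnn(f)T_\bfnn(p)=T_\bfnn(|p|^2 f)+E_\bfnn$ with $E_\bfnn$ a boundary correction of bounded norm and asymptotically negligible rank, the Galerkin operator $P_{\bfnn,\bfkk}^H T_\bfnn(f)P_{\bfnn,\bfkk}$ is spectrally equivalent to $T_\bfkk\bigl((|p|^2 f)_\mathfrak{g}\bigr)$ up to lower-order terms. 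Diagonalizing on the coset-fibre over each $\bftheta$, the matrix inequality that encodes the approximation property collapses to the scalar condition
\[
\frac{f(\bftheta)\sum_{\bfeta\in\mathcal M_\mathfrak{g}(\bftheta)}|p(\bfeta)|^2}{\sum_{\bfeta\in\Omega_\mathfrak{g}(\bftheta)}|p(\bfeta)|^2\, f(\bfeta)}\le \gamma,\qquad \bftheta\in[-\pi,\pi]^d.
\]

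The two hypotheses of the statement are tailored precisely to this estimate: the condition $\sum_{\bfeta\in\Omega_\mathfrak{g}(\bftheta)}p^2(\bfeta)>0$, combined with the continuity of $p$ and $f$ and the isolation of the unique zero $\bftheta_0$ of $f$, keeps the denominator bounded away from zero; and $\limsup_{\bftheta\to\bftheta_0}|p(\bfeta)|^2/f(\bftheta)<\infty$ for $\bfeta\in\mathcal M_\mathfrak{g}(\bftheta)$ cancels the apparent singularity of the numerator as $\bftheta\to\bftheta_0$. The main obstacle is handling this pointwise bound near $\bftheta_0$ uniformly in the direction of approach within $[-\pi,\pi]^d$: one has to exploit that among the $\mathfrak{g}^d$ points of $\Omega_\mathfrak{g}(\bftheta)$ only $\bftheta$ itself collapses to $\bftheta_0$, while the other $\mathfrak{g}^d-1$ mirror points stay at positive distance, so that $\sum_{\bfeta\in\mathcal M_\mathfrak{g}(\bftheta)}|p(\bfeta)|^2$ stays bounded and the potentially singular contribution localizes to a single term where the lim-sup hypothesis provides the desired cancellation. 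Away from a neighbourhood of $\bftheta_0$, compactness and continuity close the argument, and combining the two regimes delivers the approximation property. The Ruge-St\"uben theorem then yields the claimed TGM optimality, with the generalization from $d=1$ to general $d$ amounting to tensorization of $K_\bfnn$ and block-fibre diagonalization on the Cartesian coset structure of $\Omega_\mathfrak{g}(\bftheta)$.
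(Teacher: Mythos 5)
Note first that the paper offers no proof of this theorem: it is imported verbatim from \cite{ADS,DSS,Fiorentino-Serra}, so there is no internal argument to compare against, and your sketch must be measured against the proofs in those references. Your overall strategy --- Ruge--St\"uben splitting into smoothing and approximation properties, smoothing for the HPD matrix $T_\bfnn(f)$ via damped Richardson, and reduction of the approximation property to a pointwise condition on the fibres $\Omega_\mathfrak{g}(\bftheta)$ --- is indeed their strategy. However, your key displayed inequality is wrong, and the error is not cosmetic. The fibre reduction of the Ruge--St\"uben approximation property $\min_y\|x-P_{\bfnn,\bfkk}y\|_D^2\le\gamma\,x^HT_\bfnn(f)x$ is the boundedness of
\[
\frac{\sum_{\bfeta\in\mathcal M_\mathfrak{g}(\bftheta)}|p(\bfeta)|^2}{f(\bftheta)\sum_{\bfeta\in\Omega_\mathfrak{g}(\bftheta)}|p(\bfeta)|^2},
\]
that is, the $(\bftheta,\bftheta)$ entry of the orthogonal projector $I-pp^H/\|p\|^2$ on each fibre must be $O(f(\bftheta))$; here the hypothesis $\sum_{\Omega}p^2>0$ keeps the second factor of the denominator away from zero by compactness, and the $\limsup$ hypothesis controls the numerator near $\bftheta_0$. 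Your ratio instead carries the coarse symbol $\sum_{\bfeta\in\Omega_\mathfrak{g}(\bftheta)}|p(\bfeta)|^2f(\bfeta)$ in the denominator and an extra factor $f(\bftheta)$ in the numerator, and that quantity is \emph{not} bounded under the hypotheses: for the canonical choice $f(\theta)=2-2\cos\theta$, $p(\theta)=2+2\cos\theta$, $\mathfrak{g}=2$, $\theta_0=0$, the coarse symbol equals $16\sin^2\theta$ and vanishes at $\theta=\pi$, while $f(\pi)\,|p(0)|^2=64$, so your ratio blows up there. This is unavoidable in general: the coarse symbol must inherit a zero on the whole coset $\Omega_\mathfrak{g}(\bftheta_0)$ (otherwise recursion would be pointless), so the hypothesis $\sum_\Omega p^2>0$ cannot ``keep the denominator bounded away from zero'' as you assert. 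As written, the argument fails at its central step.

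A second, more technical gap concerns your treatment of the Toeplitz corrections. You discard the discrepancy between $P_{\bfnn,\bfkk}^HT_\bfnn(f)P_{\bfnn,\bfkk}$ and its circulant/fibre idealization as ``a boundary correction of bounded norm and asymptotically negligible rank''. The approximation property is a uniform matrix inequality that must hold for \emph{every} vector, and a bounded-norm, low-rank perturbation can destroy such an inequality; negligible rank only yields distributional (Weyl-type) conclusions. The cited proofs do not argue this way: they exploit the exact identity $K_\bfnn^HT_\bfnn(g)K_\bfnn=T_\bfkk(g_\mathfrak{g})$ together with explicit positivity and ordering arguments in the spirit of Propositions 3.4--3.5 of \cite{ST} (which this very paper uses in Subsection \ref{ssec:multigrid_disk}) to obtain the inequality exactly, not up to lower-order terms. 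With the corrected scalar condition and a genuinely uniform handling of these corrections, your outline would align with the proofs in \cite{Fiorentino-Serra,ADS,DSS}.
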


For achieving the optimality of the V-cycle method, the first condition needs to be strengthened, see \cite{ADS} for details. Moreover, it was proven in \cite{DSS} that the coarsening factor $\mathfrak{g}$ needs to be strictly greater than 2 for an overall optimal method, otherwise each multigrid iteration would require a computational cost greater than the cost of the matrix vector product. Finally, we highlight that a coarsening factor greater than 2 permits to construct less coarser levels before solving the error equation exactly. This implies that the critical issues (i.e. low rank corrections, increase of the condition number), that could worsen level after level, are propagated a smaller number of times.

\section{Numerical examples}\label{sec:num}

In this section we provide numerical examples which emphasize the computational help of the theory developed in \cite{twin-theory} for studying many different kinds of discretization of PDEs as well as for studying elliptic problems in the very general setting of weighted graphs. Using the spectral features of the considered graphs and in particular the analytical properties of the underlying symbol, we can define appropriate preconditioners for the preconditioned conjugate gradient (PCG) (see \cite{saad} for some general theory on preconditioned Krylov methods) and appropriate projectors for the multigrid method (MGM) (see \cite{Oost}), with the aim of making such methods optimal. For the optimality of MGM for unilevel Toeplitz structures corresponding to approximated differential equations in one dimension see \cite{Fiorentino-Serra}; for multi-level Toeplitz and non-Toeplitz structures corresponding to approximated PDEs in dimension greater than one see \cite{ADS,Sun}. Furthermore, we show that without the proposed preconditioning suggested in the present work, the convergence to the solution of the problem can be very slow, due to the bad conditioning of the discrete  problems under consideration.\\

First, we show some numerical experiments which exploit the complete power of the graph structures recalled in Section \ref{sec:intro} and the related spectral properties developed in \cite{twin-theory}. At this purpose, we first perform tests in which graphs are used as a discretization of the geometry of a sub-domain of $[0,1]^d$, $ d \geq 1 $. Then, we move into a complete graph setting, where the considered graph structure can represent a model for various type of networks and real-world interactions. For the sake of completeness with respect to the first part of the present work, we also perform numerical tests concerning examples in \cite[Subsections 7.2 and 7.3]{twin-theory}, which have been already investigated in the relevant literature (see, for example, \cite{sp-gr,DBFS93,solvers15,solvers-bis15,our-sinum}).\\

Before proceeding, we recall some notation concerning (proper) subgraphs and the concept of immersion of a $d$-level Toeplitz graph in a subdomain of $ [0,1]^{d} $.\\

Given a graph $\bar{G}=(\bar{V},\bar{E},\bar{w},\bar{\kappa})$ and a (proper) subset $V\subset \bar{V}$, we say that $ G=(V,E,w,\kappa) $ is a (proper) subgraph of $ \bar{G}$, and we write $ G \subset \bar{G}$ if
\begin{itemize}
	\item $V\subset \bar{V}$;
	\item $E=\{(v_i,v_j) \in \bar{E} \, | \, v_i,v_j \in V \}\subset \bar{E}$;
	\item $w=\bar{w}_{|E}$;
	\item $\kappa = \bar{\kappa}_{|\mathring{V}}$,
\end{itemize}
where the set of nodes
$$
\mathring{V}:=\left\{ v_i \in V \, | \, v_i \nsim \bar{v}_j \, \forall \, \bar{v}_j \in \bar{V}\setminus V \right\}
$$
is called \textit{interior} of $ V $ and its element are called \textit{interior nodes}.\\
Finally, given a subgraph $G=(V,E,w,\kappa)$ of $ \bar{G}=(\bar{V},\bar{E},\bar{w},\bar{\kappa})$, we can define the \textit{boundary} of $ V $ as
$$
\partial V:=\left\{ v_i \in V \, | \, v_i \sim \bar{v}_j \, \mbox{for some } \, \bar{v}_j \in \bar{V}\setminus V \right\},
$$
whose elements are called  \textit{boundary nodes}. Note that we do not request that $\kappa =\bar{\kappa}$ on $\partial V$.\\

For the concept of immersion, consider a $d$-level Toeplitz graph $ T_\bfnn\langle
\{[\bftt_1],\bfww_1\},\ldots,\{[\bftt_m],\bfww_m\}\rangle $ with weight function $ w $ and a continuous almost everywhere function $ p:[0,1]^{d} \to \mathbb{R} $. We then consider the immersion map  $\iota : V_\bfnn \to (0,1)^d$ such that
$$
\iota(v_\bfjj):= \bfjj \circ \bfhh = \left(j_1h_1,\ldots, j_dh_d \right),
$$
where $ \circ $ denotes the Hadamard (component-wise) product and $ \bfhh $ is the $ d$-dimensional vector
$$
		\bfhh:=(h_1,\ldots,h_d)=\left(\frac{1}{n_1+1},\ldots, \frac{1}{n_d+1}\right).
$$
The map $ \iota $ induces a grid graph $ G=(V'_{\bfnn},E'_{\bfnn}, w^p) $ in $ [0,1]^d $, where
$$
V'_\bfnn:=\left\{ \bfxx_\bfkk = \iota(v_\bfkk)\,|\, \bf{1}\trianglelefteq \bfkk \trianglelefteq\bfnn  \right\}, \qquad E'_\bfnn :=\left\{ (\bfxx_\bfii,\bfxx_\bfjj) \, | \, w^p(\bfxx_\bfii,\bfxx_\bfjj)\neq 0  \right\},
$$
$$
w^p (\bfxx_\bfii,\bfxx_\bfjj) := 
p\left(\frac{\bfxx_\bfii + \bfxx_\bfjj}{2}\right)w(v_\bfii,v_\bfjj).
$$
To extend this concept to any subdomain $ \Omega$ of $ [0,1]^d $ we simply restrict the set of nodes $ V'_{\bfnn} $ to
$$
V_{n'}^{\Omega} := V'_{\bfnn} \cap \Omega
$$
and consequently modify the weight function $ w^p $ to
$$
w^{\Omega,p}:=w^p_{|V_{n'}^{\Omega} \times V_{n'}^{\Omega}}.
$$

It is clearly possible to further extend the same idea to the case of $d$-level diamond Toeplitz graphs. In this case, however, the choice of the immersion map $ \iota $ is not as natural as in the previous case and different choices of the immersion map $\iota $ would be able to describe different grid geometries.

\subsection{Toeplitz graph with Fourier coefficients immersed in the triangle}\label{ssec:multigrid_triangle}
In this first numerical example our starting model operator is the classic (semi-positive definite) Laplacian on a triangular domain with Dirichlet or Neumann boundary conditions (BCs), that is 
\begin{align}
	&\mathcal{L}:\textnormal{dom}(\mathcal{L})\subset L^2\left(D\right) \to L^2\left(D\right), \label{eq:SLO1tr}\\
	&\mathcal{L}u(x,y):= -\left(\partial^2_{xx}u(x,y)+\partial^2_{yy}u(x,y)\right) \qquad (x,y)\in D,\label{eq:SLO2tr} 
\end{align}
where $D$ is an equilateral triangle contained in $[0,1]^2$. As usual, $L^2\left(D\right)$ is the space of squares integrable functions and $\textnormal{dom}(\mathcal{L})$ is an appropriate (Sobolev) subspace of $L^2\left(D\right)$ such that the formal equation \eqref{eq:SLO2tr} is well-defined and the BCs are satisfied. For example, in the case of Dirichlet BCs, $\textnormal{dom}(\mathcal{L})= \overline{C_c^\infty(D)}$ is the closure of the space of smooth and compactly supported functions in $D$ with respect to the Sobolev norm. For this numerical test we do not apply a standard discretization scheme.

The idea is to build a full graph-based discretization scheme looking at the operator itself rather than at the specific numerical approximation method, by using the Fourier coefficients of $h(\theta)=\theta^2$ defined on $[0,\pi]$. This is due to the fact that, as it was observed in \cite[Corollary C.1]{Bianchi}, when discretizing the one-dimensional Laplace operator, the sequence of spectral symbols associated to the uniform FD scheme with $(2\eta+1)$-points converges uniformly to $h$ as $\eta\to \infty$. This phenomenon is not restrained to the FD discretization scheme, but it appears in the IgA framework (\cite[Theorem 1, Theorem 2 and Lemma 1]{EFGMSS}) and in the Sinc collocation method as well (\cite{bookColl}), once the class of regularity of the approximating functions is progressively refined. Let us observe that $h$ is exactly the (inverse and normalized) asymptotic distribution function, in the Weyl sense, of the one-dimensional Laplace operator on the unit interval. In some sense, all those methods become ``indistinguishable'' in the limit of their regularity parameters, from a spectral point of view, since their sequences of spectral symbols converge to the same function, that is, the distribution function of the operator they are approximating, as one should expect. Therefore, our approach is to exploit directly $h$ as a weight function.  

As a matter of facts, $h(\theta)=\theta^2$ is the asymptotic distribution function of the one-dimensional Laplacian while here we are considering the two-dimensional Laplacian. This is not an issue, since the space of smooth functions of the form $g(x,y)=\sum_{k=1}^n \alpha_k g_{1,k}(x)g_{2,k}(y)$ is dense in $L^2(D)$ (for a reference see \cite{BD}, for example). Therefore, the two-dimensional Laplacian can be approximated by ``splitting'' it into two one-dimensional Laplacians that act separately along the two different axes.

The full graph-based discretization we implement here is then the following: first, fix $\bfnn = (n,n)$ and consider the infinite graph $\bar{G}_\bfnn$ given by
\begin{equation*}\label{eq:triangle-graph}
	\bar{G}_{\bfnn}=\left(\bar{T}_{\bfnn}\left\langle \cup_{k=1}^\infty \left\{[k,0],w_k  \right\},\cup_{k=1}^\infty \left\{[0,k],w_k  \right\}\right\rangle\right),
\end{equation*}
where $\bar{T}_{\bfnn}\left\langle \cup_{k=1}^\infty \left\{[k,0],w_k  \right\},\cup_{k=1}^\infty \left\{[0,k],w_k  \right\}\right\rangle$ is a $2$-level infinite Toeplitz graph characterized by the node set
\begin{equation*}\label{eq:def_Vn_tr}
	\bar{V}_{\bfnn}= \left\{ (x_i,y_j) =  \left(\frac{i}{n+1}, \frac{j}{n+1}\right)\, : \, i,j \in \mathbb{Z} \right\}, 
\end{equation*}
and the weight function 
\begin{equation*}
	w((x_i,y_j),(x_r,y_s)):= w_k= (-1)^{k}\frac{2}{k^2} \quad \mbox{if } (|i-r|,|j-s|)\in\{(k,0),(0,k)\}.
\end{equation*}
We highlight that $(-1)^{k+1}\frac{2}{k^2}$ is the $k$-th term of the Fourier expansion of $h(\theta)=\theta^2$ on $[0,\pi]$. Finally, consider the subgraph $G_\bfnn \subset \bar{G}_{\bfnn}$ such that 
$$
G_{\bfnn}=\left(T^D_{\bfnn}\left\langle \cup_{k=1}^n \left\{[k,0],w_k  \right\},\cup_{k=1}^n \left\{[0,k],w_k  \right\}\right\rangle, \kappa \right),
$$
and where the node set of $T^D_{\bfnn}$ is given by
$$
V_\bfnn = \bar{V}_{\bfnn} \cap D.
$$ 
The potential term $\kappa$ determines the BCs. If $\kappa =0$ then we call it the \emph{Neumann potential}: this is related to the fact that in $\R^d$, the Neumann BCs that characterizes a second-order elliptic differential operator, as \eqref{eq:SLO1tr}-\eqref{eq:SLO2tr}, disappears when
passing to the closure of its associated quadratic form, see \cite[Theorem 7.2.1]{Davies}. Instead, if
$$
k((x_i,y_j))= \sum_{(x_r,y_s) \in \bar{V}_\bfnn\setminus V_\bfnn} w((x_i,y_j),(x_r,y_s)),
$$
then we call it \emph{Dirichlet potential}. It takes into account the edge deficiency of each node in $G_\bfnn$ when seen as a node in the host graph $\bar{G}_\bfnn$. The Dirichlet potential term arises naturally for example when discretizing a differential operator with Dirichlet BCs by means of FD schemes, see \cite[pp. 40--42]{Bianchi}. For more details about Dirichlet potentials we refer to \cite[Section 2.2]{Keller2021} and \cite[Definition 2.1 and Lemma A.1]{BSW}. Look at Figure \ref{fig:immersion_in_the_triangle} for a visual representation of the graph. 

\begin{figure}
	\begin{center}
		\resizebox{0.8\textwidth}{!}{\begin{tikzpicture}[whitestyle/.style={circle,draw,fill=white!40,minimum size=4},
				graystyle/.style ={ circle ,top color =white , bottom color = gray ,
					draw,black, minimum size =4},	state2/.style ={ circle ,top color =white , bottom color = gray ,
					draw,black , text=black , minimum width =1 cm},
				state3/.style ={ circle ,top color =white , bottom color = white ,
					draw,white , text=white , minimum width =1 cm},
				state/.style={circle ,top color =white , bottom color = white,
					draw,black , text=black , minimum width =1 cm}]
				\draw[step=0.5] (-3,-3) grid (3,3);
				\draw[step=0.5, ForestGreen,very thick] (-2.5,-2.5) grid (2.5,2.5);
			   \node (t0) at (-3,-3) {}; 
			   \node (t1) at (3,-3) {}; 
			   \node (t2) at (0,2.1962) {}; 
			   \begin{scope}
			   	\pgfsetfillpattern{mynewdots}{ForestGreen}
			   	\fill (t0.center)--(t1.center)--(t2.center);
			   \end{scope}
			   \begin{scope}
			   	\pgfsetfillpattern{crosshatch dots}{red}
			   	\fill (-3,-3)--(3,-3)--(3,3)--(-3,3);
			   \end{scope}
			   \path[draw,very thick] (t0.center)--(t1.center);
			   \path[draw,very thick] (t1.center)--(t2.center);
			   \path[draw,very thick] (t2.center)--(t0.center);
				\draw[very thick, blue] (2,-2) ellipse (0.9cm and 0.9cm);
				\draw[blue, very thick, ->] (2.9,-2) to (4,-0.5);

				\foreach \x in {-2.5,-2,-1.5,-1,-0.5,0,0.5,1,1.5,2,2.5}
				\draw[red,very thick] (\x,2) to (\x,2.5);
				\foreach \x in {-2.5,-2,-1.5,-1,-0.5,0.5,1,1.5,2,2.5}
				\draw[red,very thick] (\x,1.5) to (\x,2);
				\foreach \x in {-2.5,-2,-1.5,-1,-0.5,0.5,1,1.5,2,2.5}
				\draw[red,very thick] (\x,1) to (\x,1.5);
				\foreach \x in {-2.5,-2,-1.5,-1,1,1.5,2,2.5}
				\draw[red,very thick] (\x,0.5) to (\x,1);
				\foreach \x in {-2.5,-2,-1.5,-1,1,1.5,2,2.5}
				\draw[red,very thick] (\x,0) to (\x,0.5);
				\foreach \x in {-2.5,-2,-1.5,1.5,2,2.5}
				\draw[red,very thick] (\x,-0.5) to (\x,0);
				\foreach \x in {-2.5,-2,2,2.5}
				\draw[red,very thick] (\x,-1) to (\x,-0.5);
				\foreach \x in {-2.5,-2,2,2.5}
				\draw[red,very thick] (\x,-1.5) to (\x,-1);
				\foreach \x in {-2.5,2.5}
				\draw[red,very thick] (\x,-2) to (\x,-1.5);
				\foreach \x in {-2.5,2.5}
				\draw[red,very thick] (\x,-2.5) to (\x,-2);
				
				\foreach \y in {-2,-1.5,-1,-0.5,0,0.5,1,1.5,2,2.5}
				\draw[red,very thick] (-2.5,\y) to (-2,\y);
				\foreach \y in {-2,-1.5,-1,-0.5,0,0.5,1,1.5,2,2.5}
				\draw[red,very thick] (2,\y) to (2.5,\y);
				
				\foreach \y in {-1,-0.5,0,0.5,1,1.5,2,2.5}
				\draw[red,very thick] (-2,\y) to (-1.5,\y);
				\foreach \y in {-1,-0.5,0,0.5,1,1.5,2,2.5}
				\draw[red,very thick] (1.5,\y) to (2,\y);				
				
				\foreach \y in {0,0.5,1,1.5,2,2.5}
				\draw[red,very thick] (-1.5,\y) to (-1,\y);
				\foreach \y in {0,0.5,1,1.5,2,2.5}
				\draw[red,very thick] (1,\y) to (1.5,\y);		
				
				\foreach \y in {0.5,1,1.5,2,2.5}
				\draw[red,very thick] (-1,\y) to (-0.5,\y);
				\foreach \y in {0.5,1,1.5,2,2.5}
				\draw[red,very thick] (0.5,\y) to (1,\y);	
				
				\foreach \y in {1.5,2,2.5}
				\draw[red,very thick] (-0.5,\y) to (0,\y);
				\foreach \y in {1.5,2,2.5}
				\draw[red,very thick] (0,\y) to (0.5,\y);				
				
				\foreach \x in {-2.5,-2,-1.5,-1,-0.5,0,0.5,1,1.5,2,2.5}
				\foreach \y in {-2.5,-2,-1.5,-1,-0.5,0,0.5,1,1.5,2,2.5}
				\node [whitestyle] at (\x,\y) {};
				
				
				\foreach \x in {-2.5,-2,-1.5,-1,-0.5,0,0.5,1,1.5,2,2.5}
				\foreach \y in {2.5}
				\node [graystyle] at (\x,\y) {};			
				
				\foreach \x in {-2.5,-2,-1.5,-1,-0.5,0.5,1,1.5,2,2.5}
				\foreach \y in {1.5,2}
				\node [graystyle] at (\x,\y) {};
									
				\foreach \x in {-2.5,-2,-1.5,-1,1,1.5,2,2.5}
				\foreach \y in {1}
				\node [graystyle] at (\x,\y) {};

				\foreach \x in {-2.5,-2,-1.5,-1,1,1.5,2,2.5}
				\foreach \y in {0.5}
				\node [graystyle] at (\x,\y) {};
				
				\foreach \x in {-2.5,-2,-1.5,1.5,2,2.5}
				\foreach \y in {0}
				\node [graystyle] at (\x,\y) {};
				
				\foreach \x in {-2.5,-2,2,2.5}
				\foreach \y in {-1,-0.5}
				\node [graystyle] at (\x,\y) {};
				
				\foreach \x in {-2.5,2.5}
				\foreach \y in {-2,-1.5}
				\node [graystyle] at (\x,\y) {};
							
				\node[state] at (4.5,2) (A) {$\kappa$};
				\node[state]  (J) [right=of A] {$\kappa$};
				\node[state2]  (B) [right=of J] {};
				\node[state]  (D) [below=of A] {$\kappa$};
				\node[state]  (K) [right=of D] {$\kappa$};
				\node[state2]  (E) [right=of K] {};
				\node[state]  (G) [below=of D] {$\kappa$};
				\node[state]  (L) [right=of G] {$\kappa$};
				\node[state]  (H) [right=of L] {$\kappa$};
				\path[-] (A) edge [red, bend left=35] node[above right,scale=0.8] {$\textcolor{red}{w_2}$} (B);
				\path[-] (G) edge [ForestGreen, bend right=35] node[above right,scale=0.8] {$\textcolor{ForestGreen}{w_2}$} (H);
				\path[-] (D) edge [red, bend left=35] node[sloped,above left,scale=0.8] {$\textcolor{red}{w_2}$} (E);
				\path[-] (A) edge [ForestGreen, bend right=45] node[sloped,below right,scale=0.8] {$\textcolor{ForestGreen}{w_2}$} (G);
				\path[-] (J) edge [ForestGreen, bend left=35] node[sloped,above right,scale=0.8] {$\textcolor{ForestGreen}{w_2}$} (L);
				\path[-] (B) edge [red, bend left=45] node[sloped,above right,scale=0.8] {$\textcolor{red}{w_2}$} (H);
				
				\path[-] (A) edge [ForestGreen] node[right,scale=0.8] {$\textcolor{ForestGreen}{w_1}$} (D);
				\path[-] (J) edge [ForestGreen] node[right,scale=0.8] {$\textcolor{ForestGreen}{w_1}$} (K);
				\path[-] (K) edge [ForestGreen] node[right,scale=0.8] {$\textcolor{ForestGreen}{w_1}$} (L);
				\path[-] (A) edge [ForestGreen] node[above,scale=0.8] {$\textcolor{ForestGreen}{w_1}$} (J);
				\path[-] (D) edge [ForestGreen] node[above right,scale=0.8] {$\textcolor{ForestGreen}{w_1}$} (K);
				\path[-] (G) edge [ForestGreen] node[above,scale=0.8] {$\textcolor{ForestGreen}{w_1}$} (L);
				\path[-] (J) edge [red] node[above,scale=0.8] {$\textcolor{red}{w_1}$} (B);
				\path[-] (K) edge [red] node[above right,scale=0.8] {$\textcolor{red}{w_1}$} (E);
				\path[-] (L) edge [ForestGreen] node[above,scale=0.8] {$\textcolor{ForestGreen}{w_1}$} (H);
				\path[-] (B) edge [red] node[right,scale=0.8] {$\textcolor{red}{w_1}$} (E);
				\path[-] (H) edge [red] node[right,scale=0.8] {$\textcolor{red}{w_1}$} (E);
				\path[-] (D) edge [ForestGreen] node[right,scale=0.8] {$\textcolor{ForestGreen}{w_1}$} (G);
				
				\draw[red,dashed, bend right=45] (E) to (8.7,3);
				\draw[red,dashed, bend right=35] (H) to (8.9,3);
				\draw[red,dashed] (B) to (8.5,3);
				\draw[ForestGreen,dashed, bend right=45] (J) to (3.5,2.2);
				\draw[red,dashed, bend right=45] (B) to (3.5,2.4);
				\draw[ForestGreen,dashed, bend left=45] (K) to (3.5,-0.2);
				\draw[red,dashed, bend left=45] (E) to (3.5,-0.4);
				\draw[ForestGreen,dashed, bend left=45] (L) to (3.5,-2.2);
				\draw[ForestGreen,dashed, bend left=35] (H) to (3.5,-2.4);
				\draw[ForestGreen,dashed] (A) to (4.5,3);
				\draw[ForestGreen,dashed, bend left=45] (D) to (4.3,3);
				\draw[ForestGreen,dashed, bend left=35] (G) to (4.1,3);
				\draw[ForestGreen,dashed] (J) to (6.5,3);
				\draw[red,dashed, bend left=45] (K) to (6.3,3);
				\draw[red,dashed, bend left=35] (L) to (6.1,3);
				\draw[ForestGreen,dashed] (A) to (3.5,2);
				\draw[ForestGreen,dashed] (D) to (3.5,0);
				\draw[ForestGreen,dashed] (G) to (3.5,-2);
		\end{tikzpicture}}
	\end{center}\caption{Immersion of a $2$-level grid graph inside the equilateral triangle $D$. The white nodes are the nodes of $V_\bfnn$ while the gray nodes belong to $\bar{V}_\bfnn\setminus V_\bfnn$. The green connections represent the weighted edges whose end-nodes are both interior nodes of $V_\bfnn$, while the red connections represent the weighted edges which have at least one end-node that belongs to $\bar{V}_\bfnn\setminus V_\bfnn$. For the seek of clarity, in the whole figure on the left we explicitly draw with continuous lines only the edges of distance 1, while a complete representation of the edges can be seen in the enlargement on the right.}\label{fig:immersion_in_the_triangle}
\end{figure}
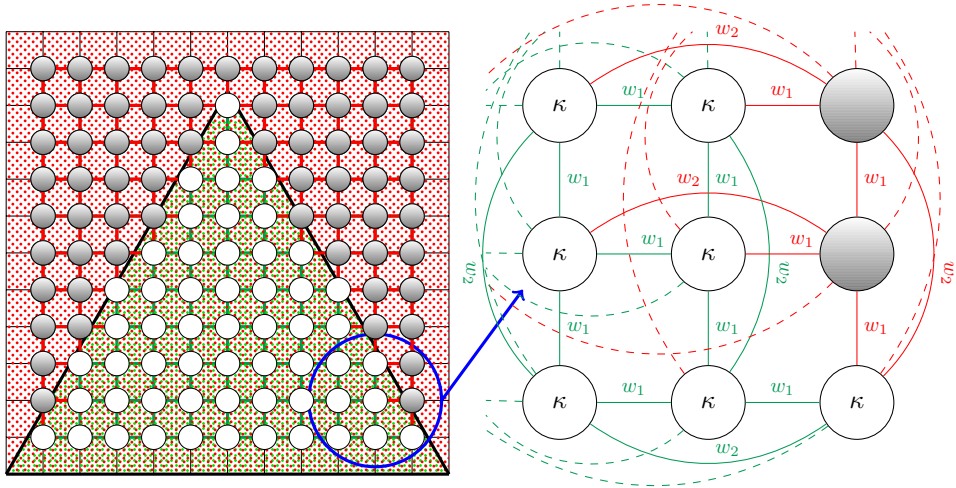

In the end, we are going to solve the equation
\begin{equation}\label{eq:triangle-linearsys}
	\Delta_{G_\bfnn} u_\bfnn = b_\bfnn, 
\end{equation}
where $\Delta_{G_\bfnn}$ is the graph Laplacian associated to $G_\bfnn$. Let us observe that $(n+1)^2\Delta_{G_\bfnn}$ approximates $\mathcal{L}$, see Table \ref{table:ex_triangle_eig} for a comparison of the spectra of the discrete and continuous operators.

\begin{table}
	\centering
	\begin{tabular}[m]{c|ccc} 
		\toprule
		$\floor*{\frac{k(\bfnn)}{d_\bfnn}}$   & $n=16$     & $n=32$  & $n=64$   \\ 
		\midrule
		0.1 & 0.0350    &   0.0429   & 0.0153    \\
		0.5                            &  0.1177    &   0.0711    & 0.0200    \\
		0.8                           &    0.1493    &   0.0821     & 0.0196    \\
		\bottomrule 
	\end{tabular}
	\begin{tabular}[m]{cccc} 
		 &  &  &    \\ 
	\end{tabular}
	\begin{tabular}[m]{c|ccc} 
		\toprule
		$\floor*{\frac{k(\bfnn)}{d_\bfnn}}$   & $n=16$     & $n=32$  & $n=64$   \\ 
		\midrule
		0.1 & 0.0536    &   0.0277    & 0.0153    \\
		0.5                            &  0.0529    &   0.0297    & 0.0200    \\
		0.8                           &    0.0819    &   0.0334    & 0.0196    \\
		\bottomrule
	\end{tabular}
	\caption{Relative errors between the eigenvalue $\lambda_{k(\bfnn)}^{(\bfnn)}$ and the corresponding evaluation of the continuous Laplacian eigenvalue on the equilateral triangle, with Neumann (left) and Dirichlet (right) BCs. The index $k(\bfnn)$ is chosen such that $k(\bfnn)/d_\bfnn$ is constant for every fixed $\bfnn=(n,n)$ with $n=16, 32, 64$.}\label{table:ex_triangle_eig}
\end{table}

\begin{rem}
	Let us observe that here we allow negative values for the weight function $w$. This is not uncommon, since in some applications it is natural to admit graphs with possible negative values associated to their edges. Nevertheless, if we restrain $w$ to be nonnegative then we can recover again the same operator $\Delta_{G_\bfnn}$ as sum of path graph Laplacians (\cite{Estrada}). For such a construction we refer to \cite[Sections II and III]{Bianchi-Donatelli}.
\end{rem}
We have the following preliminary result.
\begin{prop}\label{prop:eig_distr_triangle}
	Given the sequence of graph Laplacians $\{\Delta_{G_\bfnn}\}$ defined above, it holds that
	\begin{equation*}
		\{\Delta_{G_\bfnn}\} \sim_\lambda \mathfrak{f}(\theta_1,\theta_2)= \theta_1^2 + \theta_2^2, \quad (\theta_1,\theta_2) \in [0,\pi]^2.
	\end{equation*}
\end{prop}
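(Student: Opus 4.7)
My plan is to deduce the spectral distribution of $\{\Delta_{G_\bfnn}\}$ from the generating function of the underlying infinite $2$-level Toeplitz graph, via three stages: \emph{(i)} compute the generating function $a$ of the host adjacency $W_{\bar G_\bfnn}$; \emph{(ii)} express $\Delta_{G_\bfnn}$ as the principal submatrix, indexed by $V_\bfnn$, of the shifted operator $a(\mathbf{0})I - W_{\bar G_\bfnn}$, up to a zero-distributed correction; \emph{(iii)} invoke the immersion-in-$\Omega$ spectral-distribution theorem from Part~I (\cite{twin-theory}) to transfer the symbol of this sequence to the triangular subgraph.

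For stage \emph{(i)}, since the weights $w_k$ are (up to the normalization built into the definition of generating function of a Toeplitz matrix) the Fourier cosine coefficients of $h(\theta)=\theta^2$ on $[-\pi,\pi]$, a direct summation of the classical series
$$\theta^2 \;=\; \frac{\pi^2}{3}+\sum_{k\geq 1}\frac{4(-1)^k}{k^2}\cos(k\theta)$$
yields a closed form for
$$a(\theta_1,\theta_2)\;=\;2\sum_{k\geq 1}w_k\bigl(\cos(k\theta_1)+\cos(k\theta_2)\bigr),\qquad (\theta_1,\theta_2)\in[-\pi,\pi]^2,$$
from which $a(\mathbf{0})$ coincides with the common ``full'' degree $\sum_{v'\neq v}w(v,v')$ of every vertex of the host graph $\bar G_\bfnn$, and the shifted symbol $\ell(\theta_1,\theta_2):=a(\mathbf{0})-a(\theta_1,\theta_2)$ equals $\theta_1^2+\theta_2^2$.

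For stage \emph{(ii)}, the Dirichlet potential $\kappa$ has been defined precisely so that $(D+K)_{vv}=\sum_{v'\in \bar V_\bfnn\setminus\{v\}}w(v,v')=a(\mathbf{0})$ for every $v\in V_\bfnn$; hence, in the Dirichlet case, $\Delta_{G_\bfnn}$ is \emph{exactly} the principal submatrix, indexed by $V_\bfnn$, of the full Toeplitz operator on $\bar V_\bfnn$ with symbol $\ell$. In the Neumann case ($\kappa\equiv 0$) the diagonal of $\Delta_{G_\bfnn}$ agrees with $a(\mathbf{0})$ at every interior vertex and differs from it only on boundary nodes $v\in\partial V_\bfnn$, whose cardinality is of order $n=O(\sqrt{d_\bfnn})$. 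The associated uniformly bounded diagonal perturbation has vanishing fractional rank, so it is zero-distributed in the sense of \eqref{zero-distr-sigma}, and by the standard Weyl-type lemma $\{X_\bfnn+N_\bfnn\}\sim_\lambda f$ whenever $\{X_\bfnn\}\sim_\lambda f$ and $\{N_\bfnn\}\sim_\sigma 0$, it can be absorbed without altering the spectral symbol. Stage \emph{(iii)} then consists in applying the immersion-in-domain spectral theorem developed in Section~5 of \cite{twin-theory}: the sequence of principal submatrices of a $d$-level Toeplitz operator extracted along vertex sets $V_\bfnn = \bar V_\bfnn\cap\Omega$ associated with a regular domain $\Omega$ inherits as spectral symbol the generating function of the underlying Toeplitz structure (on its natural definition domain). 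Applied to $\{a(\mathbf{0})I-W^{\bar G}_{G_\bfnn}\}$ this yields $\{\Delta_{G_\bfnn}\}\sim_\lambda \theta_1^2+\theta_2^2$, and the evenness of $\theta_1^2+\theta_2^2$ in each variable finally reduces the definition domain from $[-\pi,\pi]^2$ to $[0,\pi]^2$.

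The main obstacle is stage \emph{(iii)}: the truncation of an infinite $2$-level Toeplitz matrix to the irregular triangular vertex set $V_\bfnn=\bar V_\bfnn\cap D$ is not a classical rectangular principal-submatrix problem, and verifying that this geometric cutting preserves the spectral symbol is exactly what required the uniform-local-structure machinery developed in Part~I. Once that general theorem is accepted, the remainder reduces to the Fourier-series computation in stage \emph{(i)} and the routine zero-distributed-perturbation bookkeeping in stage \emph{(ii)}.
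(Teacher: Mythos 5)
Your overall strategy coincides with the paper's: identify the full (square/host) $2$-level Toeplitz structure whose generating function is $\theta_1^2+\theta_2^2$ up to the constant term, and then transfer the symbol to the triangular principal submatrix. The paper implements the transfer concretely — it first shows $\{\Delta_{F_\bfnn}\}\sim_\lambda\mathfrak{f}$ for the intermediate graph on the whole square $Q=[0,1]^2$ (a genuine finite Toeplitz matrix $T_\bfnn(\mathfrak{f})$ with $\mathfrak{f}\in\mathrm{L}^1$, so classical theory applies), then multiplies by the GLT diagonal sampling sequence $I_\bfnn(\chi_D)$ and invokes the reduced-GLT lemma on principal submatrices (\cite[Lemma 3.4]{Barbarino}) — whereas you outsource the same step to the immersion theorem of Part~I. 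This is essentially the same route; note only that the Part~I statements are formulated for a \emph{fixed finite} set of directions $\bftt_1,\dots,\bftt_m$, while here the number of directions grows with $n$, which is why the paper prefers to pass through $T_\bfnn(\mathfrak{f})$ and the $\mathrm{L}^1$ Toeplitz/GLT theory rather than citing the finite-direction immersion result verbatim.

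There is, however, a genuine flaw in your stage \emph{(ii)} treatment of the Neumann case. You claim the diagonal of $\Delta_{G_\bfnn}$ differs from $a(\mathbf{0})$ only on the boundary nodes $\partial V_\bfnn$, of cardinality $O(n)=O(\sqrt{d_\bfnn})$. That count is valid for nearest-neighbour graphs, but this example has edges in \emph{every} axis direction $k=1,2,\dots$, so every node of $V_\bfnn$ has host-neighbours outside the triangle: $\partial V_\bfnn$ is all of $V_\bfnn$, and the diagonal perturbation $\kappa(v)=\sum_{v'\in\bar V_\bfnn\setminus V_\bfnn}w(v,v')$ is nonzero at every node. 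The perturbation is still zero-distributed, but for a different reason: the $1/k^2$ decay of the weights makes $\kappa(v)=O(1/(\delta n))$ at nodes whose distance from $\partial D$ exceeds $\delta$, so one splits the diagonal correction into a part of small norm plus a part supported on the $O(\delta d_\bfnn)$ nodes near the geometric boundary (small relative rank), and then applies the standard rank-plus-norm criterion for $\sim_\sigma 0$. This is the content of the paper's closing remark that ``the main diagonal differs from that of $\Delta_{F_\bfnn}$ of a term of small norm''; as written, your cardinality argument would fail. The Dirichlet half of your stage \emph{(ii)}, and stages \emph{(i)} and \emph{(iii)}, are in line with the paper's proof.
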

\begin{proof}
To ease the notation of the proof, we write $v_r = (x_i,y_j)$ and sort them by lexicographic ordering. Let us consider the graph 
$$
F_{\bfnn}=\left(T^Q_{\bfnn}\left\langle \cup_{k=1}^n \left\{[k,0],w_k  \right\},\cup_{k=1}^n \left\{[0,k],w_k  \right\}\right\rangle, \kappa \right), \qquad Q=[0,1]^2,
$$
where the node set of $T^D_{\bfnn}$ is given by
$$
V^Q_\bfnn = \bar{V}_{\bfnn} \cap Q.
$$ 
Clearly, it holds that $G_\bfnn \subset F_\bfnn \subset \bar{G}_\bfnn$. Consider now the graph Laplacian associated to $F_\bfnn$ with the Dirichlet potential. We have that $\Delta_{F_\bfnn} \in \R^{n^2\times n^2}$ and, by Definition \ref{def:graph Laplacian}, each element of the main diagonal is of the form
\begin{align*}
\left(\Delta_{F_\bfnn}\right)_{i,i} &= \deg(v_r) + k(v_r) \\
&= \sum_{\substack{v_s\sim v_r\\ v_s \in V^Q_\bfnn}} w(v_r,v_s)  + \sum_{\substack{v_s\sim v_r\\ v_s \in \bar{V}_\bfnn \setminus V^Q_\bfnn}} w(v_r,v_s) \\
&= \sum_{\substack{v_s\sim v_r\\ v_s \in \bar{V}_\bfnn}} w(v_r,v_s)\\
&= 2 \sum_{k \in \mathbb{Z}} (-1)^{k}\frac{2}{k^2}\\
&= \frac{2\pi^2}{3}.
\end{align*}
Noticing that $\left\{\frac{2\pi^2}{3} \right\}\cup \left\{ \left((-1)^k\frac{2}{k^2}, (-1)^j\frac{2}{j^2}\right) \right\}_{(k,j) \in \mathbb{Z}\times \mathbb{Z} \setminus (0,0)}$ are the Fourier coefficients of $\mathfrak{f}(\theta_1, \theta_2)= \theta_1^2 + \theta_2^2$ on $[0,\pi]^2$, and that $\Delta_{F_\bfnn}$ is a $2$-level Toeplitz matrix whose diagonals of each diagonal blocks are given exactly by the Fourier coefficients of $\mathfrak{f}$, that is, $\left(\Delta_{F_\bfnn}\right)_{\bfii,\bfjj}:=\left(\hat{\mathfrak{f}}_{\bfii-\bfjj}\right)_{\bfii,\bfjj=\bfuno}^{\bfnn}$, then by standard theory it follows easily that
$$
\{\Delta_{F_\bfnn}\} \sim_\lambda \mathfrak{f}.
$$  
Consider now the sequence of matrices $ \{ I_{\bfnn}(\chi_D) \}_{\bfnn} $ such that 
$$
I_{\bfnn}(\chi_D) := \diag\left( \chi_D \left( \frac{\bfii}{\bfnn+\bf{1}} \right) \right)_{\bfii=\bf{1},\ldots,\bfnn}.
$$
It is known (see for example \cite[Lemma 2.3]{Barbarino}) that
$$
\{ I_{\bfnn(\chi_D)} \}_{\bfnn} \sim_{\lambda} \chi_D.
$$
Moreover, it is not difficult to see that the sequence $ \{ I_{\bfnn} \}_{\bfnn} $ belongs to the wide family of GLT sequences with symbol $ \chi_D $. Using the property of $*$-algebra of the GLT sequences (see \cite[GLT properties, pg. 118-119]{glt-book-2}) it is immediate to conclude that the sequence $ \{ I_{\bfnn}(\chi_D) \Delta_{F_{\bfnn}} I_{\bfnn}(\chi_D) \} $ satisfies
$$
\{ I_{\bfnn}(\chi_D) \Delta_{F_{\bfnn}} I_{\bfnn}(\chi_D) \} \sim_{\lambda} \mathfrak{f} \, \chi_{D}.
$$
Now, $ \Delta_{G_\bfnn} $ is a principal sub-matrix of $ \Delta_{F_\bfnn} $ (and of $ I_{\bfnn}(\chi_D) \Delta_{F_{\bfnn}} I_{\bfnn}(\chi_D) $) for every $ \bfnn $; therefore, in light of \cite[Lemma 3.4]{Barbarino}, we can conclude that 
$$
\{ \Delta_{G_\bfnn} \} \sim_{\lambda} \mathfrak{f}_{|D}.
$$
For the case $ \kappa = 0 $ the only difference is that now the main diagonal is no longer constant. Nevertheless, the main diagonal differs from that of $ \Delta_{F_{\bfnn}} $ of a term of small norm and this, therefore, does not affect the resulting symbol function. Proceeding as above we have our conclusion.

\end{proof}

Let us now investigate the convergence rate of the discrete solution $u_\bfnn$ to the continuous solution $u$. In the following we consider the Dirichlet BCs case, but an analogous study can be done for Neumann BCs. For this purpose, we consider the function $u$ given by the formula
\[
u(x,y)=y(y - \sqrt{3}x)(y + \sqrt{3}x - \sqrt{3})
\]
which vanishes on the boundary of the triangle $D$ and satisfies the equation $\mathcal{L}[u](x,y)=2\sqrt{3}$. We compute the evaluations $u(x_i,y_j)$ for all $(x_i,y_j)\in V_\bfnn$ and we sort them into the vector $u^*_\bfnn$ in lexicographic ordering. Then, we solve the linear system (\ref{eq:triangle-linearsys}) with right-hand side $b_\bfnn$ equal to the vector of all $2\sqrt{3}$. In Table \ref{table:ex_triangle_error} and Figure \ref{fig:ex_triangle_error} we report the relative error of the discrete solution $u_\bfnn$ with respect to evaluations in $u^*_\bfnn$ and we see that it decreases proportionally to the partial dimension $n$.

\begin{table}[H]
	\begin{minipage}{0.5\textwidth}
		\centering
	\begin{tabular}[m]{ccc} 
		\toprule
		$n$ & $d_\bfnn$ & $\frac{\|u_\bfnn-u_\bfnn^*\|}{\|u_\bfnn^*\|}$ \\
		\midrule
		6   & 18    & 0.3157 \\
		14  & 90    & 0.1131 \\
		30  & 400   & 0.0638 \\
		62  & 1686  & 0.0287 \\
		126 & 6920  & 0.0146 \\
		254 & 28028 & 0.0071 \\
		\bottomrule
	\end{tabular}\caption{The relative error of the approximation vector $u_\bfnn$, solution of (\ref{eq:triangle-linearsys}), with respect to the vector of evaluations $u_\bfnn^*$ of the exact solution increasing $n$ in the Dirichlet BCs case.}\label{table:ex_triangle_error}
\end{minipage}\hfill
\begin{minipage}[m]{0.45\textwidth}
	\centering
	\includegraphics[height=50mm]{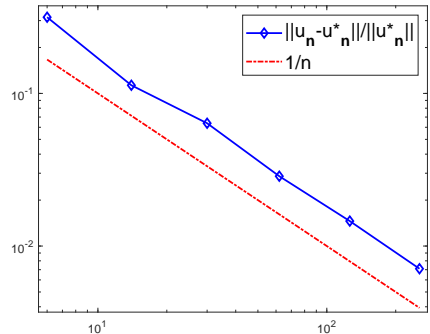}  
	\captionof{figure}{The log-log scale plot of the 2-norm relative error increasing $n$.}
		\label{fig:ex_triangle_error}
	\end{minipage}
\end{table}

In the following, we propose a multigrid strategy to solve the linear system (\ref{eq:triangle-linearsys}) based on the spectral distribution provided by Proposition \ref{prop:eig_distr_triangle}, taking for simplicity Gauss-Seidel both as a pre-smoother and post-smoother.

Assume that the partial dimension is of the form $n=2^t$, $t\in\mathbb{N}$. The graph Laplacian $\Delta_{G_\bfnn}$ has size $d_\bfnn$ such that $d_\bfnn< n^2$. Since the spectral symbol $\mathfrak{f}(\theta_1,\theta_2)= \theta_1^2 + \theta_2^2$ has a zero of order 2 in $(0,0)$, according to Theorem \ref{thm:toep_approx_prop} we choose the trigonometric polynomial $q(\theta)=4+6\cos(\theta)+4\cos(2\theta)+2\cos(3\theta)$ to construct the grid transfer operator
\[
P_{\bfnn,\bfkk}=T_\bfnn\left((q(\theta_1))(q(\theta_2))\right)K_\bfnn
\]
with $\bfkk=\left(\frac{n}{\mathfrak{g}},\frac{n}{\mathfrak{g}}\right)$, where $\mathfrak{g}$ represents the coarsening factor. Then, we eliminate from the matrix $P_{\bfnn,\bfkk}$ the rows $\mathbf{i}=(i_1,i_2)$ such that $(x_{i_1},y_{i_2})$ does not belong to $V_{\bfnn}$ and eliminate the columns $\mathbf{j}=(j_1,j_2)$ such that $(x_{j_1},y_{j_2})$ does not belong to $V_{\bfkk}$.

In Table \ref{table:iter_triangle_dir} we numerically show the validity of our proposed methods, reporting the number of iterations needed for achieving the tolerance $\varepsilon = 10^{-6}$ when increasing the matrix size. In the first and second columns, we show the results relative to the two-grid and V-cycle methods for $\mathfrak{g}=2$, while in the third and fourth columns we consider a more aggressive coarsening with $\mathfrak{g}=4$. For $\mathfrak{g}=2$, we see that the number of iterations needed for convergence remains almost constant, when increasing the size $d_\bfnn$. The method with $\mathfrak{g}=4$ has a worse convergence rate, but it can be convenient when considering greater matix-sizes from the computational complexity point of view, as explained in Subsection \ref{ssec:multigrid}.

\begin{table}[htb]
	\begin{center}
		\begin{tabular}{cccccccccc}
			\toprule
			{$t$}&{$d_\bfnn$}& {Two-grid ($\mathfrak{g}=2$)} & {V-cycle ($\mathfrak{g}=2$)} & {Two-grid ($\mathfrak{g}=4$)}& {V-cycle ($\mathfrak{g}=4)$}\\
			\midrule
			3  & 30    & 9  & 9  & 25 & 25\\
			4  & 116   & 10 & 10 & 27 & 27\\
			5  & 454   & 10 & 11 & 33 & 33\\
			6  & 1796  & 10 & 11 & 36 & 37\\
			7  & 6920  & 11 & 12 & 38 & 40 \\
			8  & 28028 & 11 & 12 & 39 & 41 \\
			\bottomrule
		\end{tabular}
	\end{center}
	\caption{Comparison of the number of iterations, varying $t$, for the two-grid and V-cycle methods applied to the linear system in (\ref{eq:triangle-linearsys}) in the Dirichlet BCs case.}
	\label{table:iter_triangle_dir}
\end{table}

In the Neumann BCs case, the matrix $\Delta_{G_\bfnn}$ is singular, since the the column vector $e$ of all ones of dimension $d_\bfnn$ is an eigenvector for $\Delta_{G_\bfnn}$ associated to the null eigenvalue. Then, we consider the matrix $A_\bfnn=\Delta_{G_\bfnn}+\frac{1}{d_\bfnn}ee^T$ and we solve the system $A_\bfnn y_\bfnn=b_\bfnn$. We have
\begin{equation}\label{eq:equivalent_system}
b_\bfnn=A_\bfnn y_\bfnn=(\Delta_{G_\bfnn}+\frac{1}{d_\bfnn}ee^T)y_\bfnn=\Delta_{G_\bfnn}y_\bfnn+\frac{1}{d_\bfnn}e(e^Ty_\bfnn)
\end{equation}
and the latter equality splits $b_\bfnn$ into the sum of a term in the range of $\Delta_{G_\bfnn}$ and a term in the null space of $\Delta_{G_\bfnn}$. Since the vector $b_\bfnn$ belongs to the range of $\Delta_{G_\bfnn}$, then $ee^Ty_\bfnn$ is the vector of all zeros and $y_\bfnn$ is the solution also of system (\ref{eq:triangle-linearsys}).

Note that the matrix $ee^T$ is a circulant matrix, hence the addition does not worsen the matrix-vector product computational cost. Since the matrix $A_\bfnn$ is SPD, we can apply the PCG for solving the linear system $A_\bfnn y_\bfnn=b_\bfnn$. In Table \ref{table:iter_triangle_neu}, we compare the efficiency, in terms of iteration count increasing the matrix-size, of two different preconditioning strategies. We see from the first iteration column that the number of iterations needed by the CG method with no preconditioning for reaching the desired tolerance $\epsilon=10^{-6}$ increases significantly. The first preconditioning strategy that we use is inspired by circulant preconditioning. Indeed, we construct the Strang circulant precondioner (plus a rank 1 correction) and we remove the rows and columns that correspond to the nodes that do not belong to the considered equilateral triangle. This procedure is far from giving an optimal convergence rate. The last strategy that we try consists in applying an MGM as preconditioner. In particular, we choose a V-cycle method with fixed tolerance $10^{-1}$ which uses Gauss-Seidel as pre-smoother and post-smoother and the standard linear interpolation as grid transfer operator. The results are shown in the last column of Table \ref{table:iter_triangle_neu}, where the number of iterations needed for the MGM-PCG to reach the tolerance $\epsilon=10^{-6}$ is almost constant as $d_\bfnn$ increases.

\begin{table}[htb]
	\begin{center}
		\begin{tabular}{cccccccccc}
			\toprule
			{$t$}&{$d_\bfnn$}& {CG} & {Circulant--PCG} & {MGM--PCG}\\
			\midrule
			3  & 30    & 30 & 21 & 6\\
			4  & 116   & 67 & 30 & 8\\
			5  & 454   & $>100$ & 42 & 9\\
			6  & 1796  & $>100$ & 60 & 9\\
			\bottomrule
		\end{tabular}
	\end{center}
	\caption{Comparison of the number of iterations, varying $t$, for the PGC method with no preconditioning, a circulant preconditioner and the MGM preconditioner applied to the linear system $A_\bfnn y_\bfnn=b_\bfnn$ in the Neumann BCs case.}
	\label{table:iter_triangle_neu}
\end{table}

\subsection{A multigrid method for a Toeplitz graph immersed in the disk}\label{ssec:multigrid_disk}
In this subsection we develop and study multigrid methods for the example in \cite[Subsection 7.1]{twin-theory}. For the reader convenience, we briefly summarize hereafter the mentioned example. 

We consider the operator $\mathcal{L}$,
\begin{align}
	&\mathcal{L}:W^{1,2}_0\left(B_{1/2}\right) \to L^2\left(B_{1/2}\right), \label{eq:SLO1}\\
	&\mathcal{L}u(x,y):= -\textnormal{div}\left[p(x,y)\nabla u(x,y)\right] + q(x,y)u(x,y) \qquad (x,y)\in B_{\frac{1}{2}},\label{eq:SLO2}
\end{align}
where
$$
B_{1/2}=\left\{(x,y)\in \R^2\, : \, 4\left(x-\frac{1}{2}\right)^2+4\left(y-\frac{1}{2}\right)^2<1\right\},
$$
and $W^{1,2}_0\left(B_{1/2}\right)$ is the closure of the space of smooth and compactly supported functions in the disk with respect to the Sobolev norm. $\mathcal{L}$ is characterized by Dirichlet boundary conditions on the boundary of the disk $B_{1/2}\subset[0,1]^2$. Fixing the diffusion term $p(x,y)=1+(x-1/2)^2+(y-1/2)^2$ and the potential term $q(x,y)=\textrm{e}^{xy}$, the discretization is made by an equispaced two-dimensional central FD approximation with $5$-points. Fix $\bfnn = (n, n)$. Then, the resulting graph $G_{\bfnn}$ that approximates the underlying geometry is given by
\begin{equation*}\label{eq:5-FD-graph}
	G_{\bfnn}=\left(T_{\bfnn}^{B_{1/2}}\left\langle \left\{[1,0],w^p  \right\},\left\{[0,1],w^p  \right\}\right\rangle, \kappa\right),
\end{equation*}
where $T_{\bfnn}^{B_{1/2}}\left\langle \left\{[1,0],w^p  \right\},\left\{[0,1],w^p  \right\}\right\rangle$ is a $2$-level Toeplitz graph characterized by the node set
\begin{equation*}\label{eq:def_Vn}
	V^{B_{1/2}}_{\bfnn}= \left\{(x_i,y_j) \in [0,1]^2 \, : \, (x_i,y_j) \in B_{\frac{1}{2}} \right\}, \quad (x_i,y_j)= \left(\frac{i}{n+1}, \frac{j}{n+1}\right)\mbox{ for } i,j=1,\ldots, n,
\end{equation*}
and the weight function 
\begin{equation*}
	w^p((x_i,y_j),(x_r,y_s)):=  p\left(\frac{x_i+x_r}{2},  \frac{y_j+y_s}{2} \right) \quad \mbox{if } (|i-r|,|j-s|)\in\{(1,0),(0,1)\}.
\end{equation*}
The potential term $\kappa$ is the Dirichlet potential, as we already introduced it in the previous Subsection \ref{ssec:multigrid_triangle}. In particular, $G_{\bfnn}$ is a sub-graph of a graph $\bar{G}_{\bfnn}$, of the same structure of $G_{\bfnn}$, but whose node set $\bar{V}_{\bfnn}$ is given by all the points $(x_i,y_j)$ in $[0,1]^2$ and whose weight function $\bar{w}^p$ is obtained by extending $p$ continuously outside the disk, fixing $p(x,y)\equiv 5/2$ for every $(x,y)\in [0,1]^2\setminus B_{1/2}$. We have that
\begin{equation*}
	\kappa(x_i,y_j)=\begin{cases}
		\kappa_0(x_i,y_j)= h^2q(x_i,y_j) &\mbox{if } (x_i,y_j) \in \mathring{V}^{B_{1/2}}_\bfnn,\\
		\kappa_1(x_i,y_j)= h^2q(x_i,y_j) + \frac{5}{2}, &\mbox{if } \exists! \mbox{ one neighbor}\\ & \mbox{in } \bar{V}_\bfnn\setminus V^{B_{1/2}}_\bfnn,\\
		\kappa_2(x_i,y_j)= h^2q(x_i,y_j) + \frac{10}{2}, &\mbox{if } \exists! \mbox{ two neighbors}\\ & \mbox{in } \bar{V}_\bfnn\setminus V^{B_{1/2}}_\bfnn.\\
	\end{cases}
\end{equation*}
For all the details we refer to \cite[Subsection 7.1]{twin-theory}. See Figure \ref{fig:immersion_in_the_disk}.

\begin{figure}
	\begin{center}
		\resizebox{0.8\textwidth}{!}{\begin{tikzpicture}[whitestyle/.style={circle,draw,fill=white!40,minimum size=4},
				graystyle/.style ={ circle ,top color =white , bottom color = gray ,
					draw,black, minimum size =4},	state2/.style ={ circle ,top color =white , bottom color = gray ,
					draw,black , text=black , minimum width =1 cm},
				state3/.style ={ circle ,top color =white , bottom color = white ,
					draw,white , text=white , minimum width =1 cm},
				state/.style={circle ,top color =white , bottom color = white,
					draw,black , text=black , minimum width =1 cm}]
				\draw[step=0.5] (-3,-3) grid (3,3);
				\draw[step=0.5, red,very thick] (-2.5,-2.5) grid (2.5,2.5);
				\draw[step=0.5, ForestGreen,very thick] (-2,-2) grid (2,2);
				\draw[step=0.5, ForestGreen, very thick] (-1.5,-2.5) grid (1.5,2.5);
				\draw[step=0.5, ForestGreen, very thick] (-2.5,-1.5) grid (2.5,1.5);
				\draw[very thick] (0,0) circle (3);
				\draw[very thick, blue] (2.25,-2) ellipse (0.4cm and 0.8cm);
				\draw[blue, very thick, ->] (2.65,-2) to (4,-0.5);
				\draw[very thick, blue] (-2.5,0) ellipse (0.4cm and 0.8cm);

				\foreach \y in {-1.5,...,1.5}
				\draw[red,very thick] (2.5,\y) to (3,\y);
				\foreach \y in {-1,...,1}
				\draw[red,very thick] (2.5,\y) to (3,\y);
				
				\foreach \y in {-1.5,...,1.5}
				\draw[red,very thick] (-2.5,\y) to (-3,\y);
				\foreach \y in {-1,...,1}
				\draw[red,very thick] (-2.5,\y) to (-3,\y);
				
				\foreach \x in {-1.5,...,1.5}
				\draw[red,very thick] (\x,-2.5) to (\x,-3);
				\foreach \x in {-1,...,1}
				\draw[red,very thick] (\x,-2.5) to (\x,-3);
				
				\foreach \x in {-1.5,...,1.5}
				\draw[red,very thick] (\x,2.5) to (\x,3);
				\foreach \x in {-1,...,1}
				\draw[red,very thick] (\x,2.5) to (\x,3);

				\foreach \x in {-1,...,1}
				\node [whitestyle] at (\x,2.5) {};
				\foreach \x in {-0.5,...,0.5}
				\node [whitestyle] at (\x,2.5) {};
				
				\foreach \x in {-2,...,2}
				\node [whitestyle] at (\x,2) {};
				\foreach \x in {-1.5,...,1.5}
				\node [whitestyle] at (\x,2) {};
				
				\foreach \x in {-2,...,2}
				\node [whitestyle] at (\x,1.5) {};
				\foreach \x in {-1.5,...,1.5}
				\node [whitestyle] at (\x,1.5) {};
				
				\foreach \x in {-2.5,...,2.5}
				\node [whitestyle] at (\x,1) {};
				\foreach \x in {-2,...,2}
				\node [whitestyle] at (\x,1) {};
				
				\foreach \x in {-2.5,...,2.5}
				\node [whitestyle] at (\x,0.5) {};
				\foreach \x in {-2,...,2}
				\node [whitestyle] at (\x,0.5) {};
				
				\foreach \x in {-2.5,...,2.5}
				\node [whitestyle] at (\x,0) {};
				\foreach \x in {-2,...,2}
				\node [whitestyle] at (\x,0) {};
				
				\foreach \x in {-2.5,...,2.5}
				\node [whitestyle] at (\x,-0.5) {};
				\foreach \x in {-2,...,2}
				\node [whitestyle] at (\x,-0.5) {};
				
				\foreach \x in {-2.5,...,2.5}
				\node [whitestyle] at (\x,-1) {};
				\foreach \x in {-2,...,2}
				\node [whitestyle] at (\x,-1) {};
				
				\foreach \x in {-2,...,2}
				\node [whitestyle] at (\x,-1.5) {};
				\foreach \x in {-1.5,...,1.5}
				\node [whitestyle] at (\x,-1.5) {};
				
				\foreach \x in {-2,...,2}
				\node [whitestyle] at (\x,-2) {};
				\foreach \x in {-1.5,...,1.5}
				\node [whitestyle] at (\x,-2) {};
				
				\foreach \x in {-1,...,1}
				\node [whitestyle] at (\x,-2.5) {};
				\foreach \x in {-0.5,...,0.5}
				\node [whitestyle] at (\x,-2.5) {};
				
				\node [whitestyle] at (1.5,2.5) {};
				\node [whitestyle] at (1.5,-2.5) {};
				\node [whitestyle] at (-1.5,2.5) {};
				\node [whitestyle] at (-1.5,-2.5) {};
				
				\node [whitestyle] at (-2.5,-1.5) {};
				\node [whitestyle] at (2.5,1.5) {};
				\node [whitestyle] at (-2.5,1.5) {};
				\node [whitestyle] at (2.5,-1.5) {};
				\node [graystyle] at (2.5,2.5) {};
				\node [graystyle] at (2,2.5) {};
				\node [graystyle] at (2.5,2) {};
				
				\node [graystyle] at (-2.5,-2.5) {};
				\node [graystyle] at (-2,-2.5) {};
				\node [graystyle] at (-2.5,-2) {};
				
				\node [graystyle] at (-2.5,2.5) {};
				\node [graystyle] at (-2,2.5) {};
				\node [graystyle] at (-2.5,2) {};
				
				\node [graystyle] at (2.5,-2.5) {};
				\node [graystyle] at (2,-2.5) {};
				\node [graystyle] at (2.5,-2) {};
				\node[state] at (5,2) (A) {$\kappa_0$};
				\node[state]  (B) [right=of A] {$\kappa_2$};
				\node[state]  (D) [below=of A] {$\kappa_2$};
				\node[state2]  (E) [right=of D] {};
				\node[state2]  (G) [below=of D] {};
				\node[state2]  (H) [right=of G] {};
				\path[-] (A) edge [ForestGreen] node[above] {$\textcolor{black}{w^p}$} (B);
				\path[-] (G) edge [red] node[above] {$\textcolor{black}{w^{\bar{p}}}$} (H);
				\path[-] (D) edge [red] node[above] {$\textcolor{black}{w^{\bar{p}}}$} (E);
				\path[-] (H) edge [red] node[right] {$\textcolor{black}{w^{\bar{p}}}$} (E);
				
				\path[-] (A) edge [ForestGreen] node[right] {$\textcolor{black}{w^p}$} (D);
				\path[-] (B) edge [red] node[right] {$\textcolor{black}{w^{\bar{p}}}$} (E);
				\path[-] (D) edge [red] node[right] {$\textcolor{black}{w^{\bar{p}}}$} (G);
				
				\draw[ForestGreen,dashed] (B) to (7,3);
				\draw[red,dashed] (B) to (8,2);
				\draw[ForestGreen,dashed] (A) to (5,3);
				\draw[ForestGreen,dashed] (A) to (4,2);
				\draw[ForestGreen,dashed] (D) to (4,0);
				\node[state] at (-5,0) (A1) {$\kappa_1$};
				\node[state]  (B1) [above=of A1] {$\kappa_1$};
				\node[state]  (C1) [below=of A1] {$\kappa_1$};
				\node[state2]  (D1) [left=of A1] {};
				\node[state2]  (E1) [left=of B1] {};
				\node[state2]  (F1) [left=of C1] {};
				\draw[blue, very thick, ->] (-2.9,0) to (-3.8,0);
				\path[-] (A1) edge [ForestGreen] node[right] {$\textcolor{black}{w^p}$} (B1);
				\path[-] (A1) edge [ForestGreen] node[right] {$\textcolor{black}{w^p}$} (C1);
				\path[-] (A1) edge [red] node[below] {$\textcolor{black}{w^{\bar{p}}}$} (D1);
				\path[-] (B1) edge [red] node[below] {$\textcolor{black}{w^{\bar{p}}}$} (E1);
				\path[-] (C1) edge [red] node[below] {$\textcolor{black}{w^{\bar{p}}}$} (F1);
				\path[-] (D1) edge [red] node[right] {$\textcolor{black}{w^{\bar{p}}}$} (F1);
				\path[-] (D1) edge [red] node[right] {$\textcolor{black}{w^{\bar{p}}}$} (E1);

				\draw[ForestGreen,dashed] (B1) to (-5,3);
				\draw[ForestGreen,dashed] (C1) to (-5,-3);
				\draw[ForestGreen,dashed] (C1) to (-4,-2);
				\draw[ForestGreen,dashed] (A1) to (-4,0);
				\draw[ForestGreen,dashed] (B1) to (-4,2);
		\end{tikzpicture}}
	\end{center}\caption{Immersion of a $2$-level grid graph inside the disk $B_{1/2}\subset [0,1]^2$. The white nodes are the nodes of $V^{B_{1/2}}_\bfnn$ while the gray nodes belong to $\bar{V}_\bfnn\setminus V^{B_{1/2}}_\bfnn$.  The potential term of the host graph $\bar{G}_n$ is determined only by the potential term $q$ from \eqref{eq:SLO1} while the potential term $\kappa$ of the sub-graph $G_\bfnn$ is influenced by the nodes in $\bar{V}_\bfnn\setminus V^{B_{1/2}}_\bfnn$ on the boundary set $\partial V^{B_{1/2}}_\bfnn$. This influence is due to the presence of Dirichlet BCs in \eqref{eq:SLO2}. The green connections represent the weighted edges whose end-nodes are both interior nodes of $V^{B_{1/2}}_\bfnn$, while the red connections represent the weighted edges which have at least one end-node that belongs to $\bar{V}_\bfnn\setminus V^{B_{1/2}}_\bfnn$. The potential term $\kappa$ sums the weight of a red edge to every of its end-nodes which belong to $V^{B_{1/2}}_\bfnn$.}\label{fig:immersion_in_the_disk}
\end{figure}
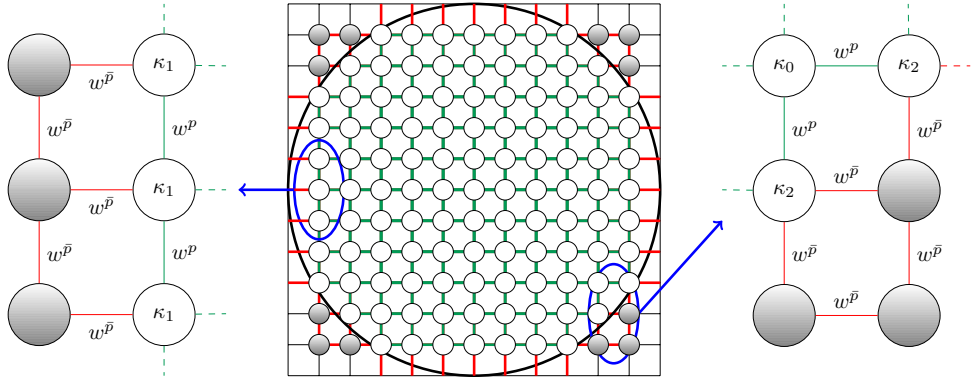

The graph Laplacian $\Delta_{G_\bfnn}$, associated to the graph $G_\bfnn$, approximates the normalized operator $(n+1)^{-2}\mathcal{L}$. By  \cite[Corollary 5.3]{twin-theory} it immediately follows that
\begin{equation*}
	\{ \Delta_{G_\bfnn} \} \sim_{\lambda} \mathfrak{f}(x,y,\theta_1,\theta_2), \qquad (x,y,\theta_1,\theta_2) \in B_{1/2}\times[-\pi,\pi]^2
\end{equation*}
where
\begin{align}\label{eq:distribution_FD_example}
	\mathfrak{f}(x,y,\theta_1,\theta_2)&=p(x,y)\left(4-f(\theta_1,\theta_2)\right) \nonumber\\
	&=\left[1+(x-1/2)^2+(y-1/2)^2\right]\left(4 - 2\cos(\theta_1)  -2\cos(\theta_2)\right),
\end{align}
and because of a symmetric argument, we can restrict $\mathfrak{f}$ on $B_{1/2}\times [0,\pi]^2$ without affecting the validity of the identity \eqref{distribution:sv-eig}. Let us define the function $g:[-\pi,\pi]^2\rightarrow \mathbb{R}$
\[
g(\theta_1,\theta_2)=4 - 2\cos(\theta_1)-2\cos(\theta_2)
\]
and the function $a:[0,1]^2\rightarrow \mathbb{R}$ as the composition of the potential term $p(x,y)=1+(x-1/2)^2+(y-1/2)^2$ with the following mapping of the square $[0,1]^2$ into the disk $B_{1/2}$
\[
m(x,y)=\left(x\sqrt{1-\frac{1}{2}y^2},y\sqrt{1-\frac{1}{2}x^2}\right),
\]
that is,
\[
a(x,y)=p(m(x,y))=1+\left(x\sqrt{1-\frac{1}{2}y^2}-\frac{1}{2}\right)^2+\left(y\sqrt{1-\frac{1}{2}x^2}-\frac{1}{2}\right)^2.
\]
Let us first consider the multilevel Toeplitz matrices $T_\bfnn\left(g\right)$, with $\bfnn=(n,n)$. According to the discussion in Subsection \ref{ssec:multigrid} and Theorem \ref{thm:toep_approx_prop}, in order to obtain a two-grid method that fulfills the approximation property for the linear systems
\[
T_\bfnn\left(g\right)x_\bfnn=b_\bfnn
\]
we can choose the following prolongation operator
\begin{equation}\label{eq:prolong_operator}
	P_{\bfnn,\bfkk}=T_\bfnn\left((2+2\cos(\theta_1))(2+2\cos(\theta_2))\right)K_\bfnn.
\end{equation}
By Proposition 3.4 in \cite{ST}, if we prove that
\begin{equation}\label{eqn:T_le_DT}
	T_\bfnn\left(g\right)\le
	T_\bfnn\left(g\right)^{\frac{1}{2}}
	\diag_{\bfnn}\left(a\right)
	T_\bfnn\left(g\right)^{\frac{1}{2}},
\end{equation}
then $P_{\bfnn,\bfkk}$ is a suitable prolongation strategy also for the linear systems
\[
	T_\bfnn\left(g\right)^{\frac{1}{2}}
	\diag_{\bfnn}\left(a\right)
	T_\bfnn\left(g\right)^{\frac{1}{2}}x_\bfnn=b_\bfnn.
\]
By the Sylvester's law of inertia, condition \eqref{eqn:T_le_DT} is equivalent to
\[
I_\bfnn\le \diag_{\bfnn}\left(a\right)
\]
and the latter inequality is fulfilled if and only if the function $a(x,y)$ is greater than or equal to 1 on $[0,1]^2$, which is trivially verified. According to Proposition 3.5 in \cite{ST}, in order to choose a relaxation parameter $\omega$ such that the Richardson method satisfies the smoothing property we need to estimate the following quantity
\begin{align*}
\sup_{\bfnn} \rho\left(	T_\bfnn\left(g\right)^{\frac{1}{2}}
	\diag_{\bfnn}\left(a\right)
	T_\bfnn\left(g\right)^{\frac{1}{2}}\right)
	&=\sup_{\bfnn} \left\|	T_\bfnn\left(g\right)^{\frac{1}{2}}
	\diag_{\bfnn}\left(a\right)
	T_\bfnn\left(g\right)^{\frac{1}{2}}\right\|_2\\
	&\le \sup_{\bfnn} \left\|	T_\bfnn\left(g\right)^{\frac{1}{2}}\right\|_2
	\left\|\diag_{\bfnn}\left(a\right)\right\|_2
	\left\|T_\bfnn\left(g\right)^{\frac{1}{2}}\right\|_2\\
	&< \sqrt{8}\cdot\frac{5}{4}\cdot\sqrt{8}=10.
\end{align*}
Then, we can take $\omega$ in the interval $\left(0, \frac{2}{10}\right]$.
In order to study the spectral features of the considered matrix-sequence we employ the Generalized Locally Toeplitz (GLT) theory tools (\cite{glt-book-1}). Following the notation of \cite{twin-theory}, we use the GLT properties \textbf{GLT1}, \textbf{GLT3} and \textbf{GLT4} to show that the subsequent spectral distribution holds
\[
	\left\{T_\bfnn\left(g\right)^{\frac{1}{2}}
	\diag_{\bfnn}\left(a\right)
	T_\bfnn\left(g\right)^{\frac{1}{2}}\right\}_\bfnn \sim_{\lambda} a(x,y)g(\theta_1,\theta_2),
\]
where, in particular, we exploited the algebra property of GLT sequences, which include Toeplitz and diagonal-sampling matrix-sequences, provided that they fulfill specific hypothesis. By the very definition of spectral distribution, it is immediate to see that the latter is equivalent to
\[
	\left\{T_\bfnn\left(g\right)^{\frac{1}{2}}
	\diag_{\bfnn}\left(a\right)
	T_\bfnn\left(g\right)^{\frac{1}{2}}\right\}_\bfnn \sim_{\lambda} \mathfrak{f}(x,y,\theta_1,\theta_2)
\]
where $\mathfrak{f}(x,y,\theta_1,\theta_2)$ is defined as in (\ref{eq:distribution_FD_example}).

Consider now the multi-index $\bfnn=(n,n)$ such that $n=2^t$, $t\in\mathbb{N}$. From the definition of $V_{\bfnn}$ in equation (\ref{eq:def_Vn}), we recall that the dimension $d_\bfnn$ of the graph Laplacian $\Delta_{G_\bfnn}$ is such that $d_\bfnn< n^2$. Taking inspiration from all the previous considerations, we propose a two-grid method for the linear system
\[
	\Delta_{G_\bfnn}x_\bfnn=b_\bfnn
\]
having a prolongation operator of the form in equation (\ref{eq:prolong_operator}). More precisely, as a first step construct
\[
P_{\bfnn,\bfkk}=T_\bfnn\left((2+2\cos(\theta_1))(2+2\cos(\theta_2))\right)K_\bfnn
\]
with $\bfkk=(\frac{n}{2},\frac{n}{2})$. Then, eliminate from the matrix $P_{\bfnn,\bfkk}$ the rows $\mathbf{i}=(i_1,i_2)$ such that $(x_{i_1},y_{i_2})$ does not belong to $V_{\bfnn}$ and eliminate the columns $\mathbf{j}=(j_1,j_2)$ such that $(x_{j_1},y_{j_2})$ does not belong to $V_{\bfkk}$. With this procedure, the matrix-sizes are consistent and it is also possible to implement a V-cycle algorithm using the same prolongation/restriction strategy at all levels.

In Table \ref{table:FD_MGM} we numerically show the validity of our proposed methods, reporting the number of iterations needed for achieving the tolerance $\varepsilon = 10^{-6}$ when increasing the matrix size. In the first column, we show the results relative to the two-grid method with pre-smoother and post-smoother one iteration of the Richardson method with relaxation parameters $1/5$ and $2/15$ respectively. In the second and third columns, we use Gauss-Seidel for the two-grid and V-cycle algorithms respectively. In all cases, we see that the number of iterations needed for convergence remains almost constant, when increasing the size $d_\bfnn$.

\begin{table}[htb]
	\begin{center}
		\begin{tabular}{cccccccccc}
			\hline
	{$t$}&{$d_\bfnn$}& {Two-grid/Richardson} & {Two-grid/Gauss Seidel}& {V-cycle/Gauss Seidel}\\
			\hline
	     	3&	 60&15& 7& 7\\
			4& 	216&15& 9& 9\\
			5& 	848&17&10&10\\
			6& 3300&17& 9&10\\
			\hline
		\end{tabular}
	\end{center}
	\caption{{Comparison of the number of iterations, varying $t$, for the two-grid and V-cycle methods applied to the graph Laplacian $\Delta_{G_\bfnn}$.}}
	\label{table:FD_MGM}
\end{table}

\subsection{Elliptic problems on diamond Toeplitz graphs}
All the examples in the previous subsections were about graphs arising from PDEs approximations, and the PDEs were of the form of elliptic equations on bounded subsets of $\R^d$. For this numerical example, we move into a complete graph setting, where the graph can model many other different type of real-world interactions without being necessarily restricted to be the physical approximation of a geometric overlying domain.

Given a graph $G$ with node set $V$ such that $V=V_1\cup V_2$, $V_1\cap V_2 = \emptyset$, then the model problem we want to solve is 
\begin{equation}\label{eq:full-graph}
\begin{cases}
	\Delta_G u(v_i)= f(v_i) &\mbox{if } v_i \in V_1,\\
	u(v_i)= h(v_i) &\mbox{if } v_i \in  V_2.
\end{cases}	
\end{equation}
This kind of problem can be viewed again as an elliptic, nonhomogeneous Dirichlet problem (NHDP), but on graphs. Indeed, $\Delta_G$ is positive semi-definite, and if $G$ is a subgraph of a graph $\bar{G}$ with node set $\bar{V}$, then we can fix $V_1 = \mathring{V}$ and $V_2= \partial V$, where we recall that the (inner) boundary is defined as $\partial V:= \left\{ v_i \in V \, : \,  v_i \sim \bar{v}_j \mbox{ for some } \bar{v}_j\in \bar{V}\setminus V \right\}$ and $\mathring{V} = V\setminus \partial V$. With this choice, Problem \eqref{eq:full-graph} becomes
\begin{equation}\tag{NHDP}\label{eq:full-graph2}
	\begin{cases}
		\Delta_G u(v_i)= f(v_i) &\mbox{if } v_i \in \mathring{V},\\
		u(v_i)= h(v_i) &\mbox{if } v_i \in  \partial V.
	\end{cases}	
\end{equation}
For simplicity, we will assume now that both $G$ and $\bar{G}$ have zero potential terms, that is, $\kappa= \bar{\kappa}\equiv 0$. If we explicit now the action of the graph Laplacian, we can see that, for every $v_i \in \mathring{V}$, 
\begin{align*}
\Delta_G u(v_i) &= \sum_{\substack{v_j\sim v_i\\v_j \in V}} w(v_i,v_j)\left(u(v_i) - u(v_j)\right)\\
 &= \sum_{\substack{v_j\sim v_i\\v_j \in \mathring{V}}} w(v_i,v_j)\left(u(v_i) - u(v_j)\right) + \sum_{\substack{v_j\sim v_i\\v_j \in \partial V}} w(v_i,v_j)\left(u(v_i) - h(v_j)\right)\\
&= \sum_{\substack{v_j\sim v_i\\v_j \in \mathring{V}}} w(v_i,v_j)\left(u(v_i) - u(v_j)\right) + \mathring{\kappa}(v_i)u(v_i) - g(v_i), 
\end{align*}
where
$$
\mathring{\kappa}(v_i)=\sum_{\substack{v_j\sim v_i\\v_j \in \partial V}}w(v_i,v_j), \qquad g(v_i)= \sum_{\substack{v_j\sim v_i\\v_j \in \partial V}} w(v_i,v_j)h(v_j).
$$
Observing that
$$
\sum_{\substack{v_j\sim v_i\\v_j \in \mathring{V}}} w(v_i,v_j)\left(u(v_i) - u(v_j)\right) + \mathring{\kappa}(v_i)u(v_i)
$$
defines the action of the graph Laplacian associated with the graph $\mathring{G}:=\left(\mathring{V},  \mathring{E}, \mathring{w},\mathring{\kappa}\right)\subset G$, where 
$$
\mathring{E}= \left\{ (v_i,v_j) \in E \, | \, v_i, v_j \in \mathring{V} \right\}, \qquad \mathring{w} = w_{|\mathring{E}}, 
$$
then Problem \eqref{eq:full-graph2} is equivalent to solve
\begin{equation}\tag{NHDP'}\label{eq:full-graph3}
\Delta_{\mathring{G}}u(v_i) = g(v_i)+f(v_i), \quad	v_i \in \mathring{V}.
\end{equation}
Let us notice that the Dirichlet BCs in Problem \eqref{eq:full-graph2} have been absorbed by the forcing term $g$ and the potential term $\mathring{\kappa}$, and that $g\equiv 0$ if $h \equiv 0$ (zero Dirichlet BCs). Since 
$$
\mathring{\kappa}(v_i) = \sum_{\substack{v_j\sim v_i\\v_j \in \partial V}}w(v_i,v_j) = \sum_{\substack{v_j\sim v_i\\v_j \in V\setminus \mathring{V}}}w(v_i,v_j),
$$
again, we recover the Dirichlet potential as in Subsections \ref{ssec:multigrid_triangle} and \ref{ssec:multigrid_disk}. $\mathring{\kappa}$ accounts for the edge deficiency of the node $v_i$ in $\mathring{V}$, seen as a node in $V$. It emerges from the computation of the action of the graph Laplacian associated to $G$, when imposing $u\equiv 0$ on $\partial V=V\setminus \mathring{V}$. 

For the numerical experiments, owing to the equivalence of \eqref{eq:full-graph2} and \eqref{eq:full-graph3}, we work directly with $\mathring{G}$. Let us fix the mold graph $M=\left([4], E, w\right)$ such that
$$
w(i,j)= \begin{cases}
	j-i & \mbox{if } $i=1$ \mbox{ and } j=2,3,4,\\
	0 &\mbox{otherwise}. 
\end{cases}
$$ 
The diamonds will be denoted by $M(k)= (V(k), E(k), w_k)\simeq M$. We set 
$$
L= \begin{bmatrix}
	10 & 0 & 0 &0 \\
	0 & 0 & 0 & 0\\
	0 & 0 & 0& 0\\
	0 & 1 & 0 & 0
\end{bmatrix}
$$
and $G = T_{n,4}^M\left\langle (1, L) \right\rangle$ the corresponding $1$-level diamond Toeplitz graph, see Figure \ref{fig:diamond_graph}. Finally, we set  
\begin{itemize}
\item $\partial V= V(1)\cup V(n)$, that is, $\partial V$ is given by the nodes that belong to the node sets of the first and the last diamonds. Remember that $V(k)=\{v_{(k,1)}, v_{(k,2)}, v_{(k,3)}, v_{(k,4)}\}$;
\item $h(v_{(1,1)})=0.5, h(v_{(1,2)})=0.25, h(v_{(1,3)})=h(v_{(1,4)})=0$;
\item $h(v_{(n,1)})=0.5, h(v_{(n,2)})=0.25, h(v_{(n,3)})= h(v_{(n,4)})=0$;
\item $f(v_{(k,i)})= \sin(ki)$.
\end{itemize}
The symbol associated to the graph Laplacian $\Delta_{\mathring{G}}$ is given by
$$
\bfff (\theta) = D  -\left[ W +(L+L^*)\cos(\theta) +(L - L^*) \i\sin(\theta)\right]\in \mathbb{R}^{4\times 4}, \qquad \theta\in [-\pi,\pi]
$$
where 
$$
W= \begin{bmatrix}
	0 & 1 & 2 & 3\\
	1 & 0 & 0 & 0\\
	2 & 0 & 0 & 0\\
	3 & 0 & 0 & 0
\end{bmatrix}, \qquad 
D= \begin{bmatrix}
26 & 0 & 0 & 0\\
0 & 2 & 0 & 0\\
0 & 0 & 2 & 0\\
0 & 0 & 0 & 4
\end{bmatrix},
$$
which gives in compact form
$$
\bfff (\theta) = \begin{bmatrix}
26-20\cos(\theta) & -1 & -2 & -3\\
-1 & 2 & 0 & -\cos(\theta)+\i\sin(\theta)\\
-2 & 0 & 2 & 0\\
-3 & -\cos(\theta)-\i\sin(\theta) & 0 & 4
\end{bmatrix}.
$$
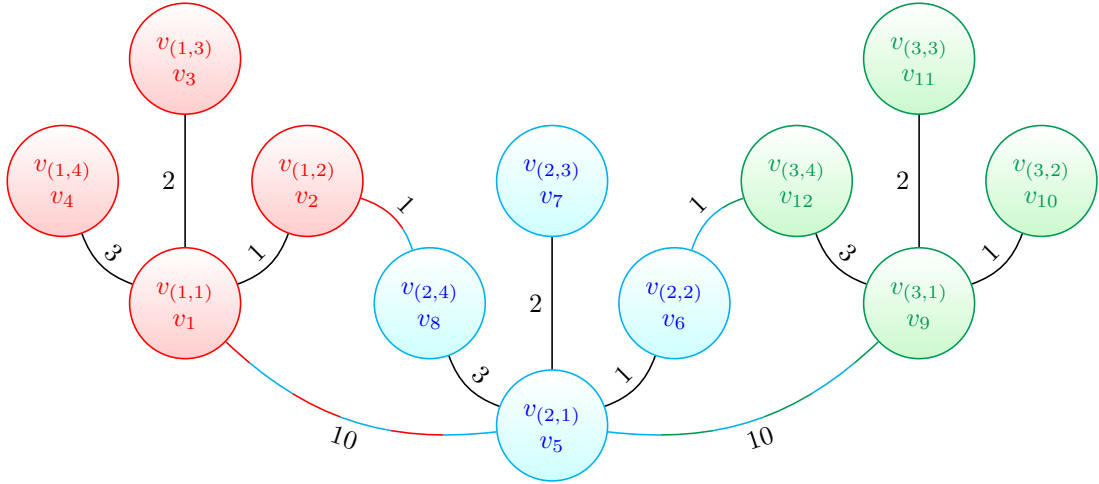
\begin{figure}
\centering
\begin {tikzpicture}[auto ,node distance =2.3 cm,on grid ,
semithick ,
state/.style ={ circle ,top color =white , bottom color = processblue!20 ,
	draw,processblue , text=blue , minimum width =1 cm},
state2/.style ={ circle ,top color =white , bottom color = ForestGreen!20 ,
	draw,ForestGreen , text=ForestGreen , minimum width =1 cm},
state3/.style ={ circle ,top color =white , bottom color = red!20 ,
	draw,red , text=red , minimum width =1 cm},
stateW/.style ={ circle ,top color =white , bottom color = white ,
	draw,white , text=white , minimum width =1 cm}]
\node[state] (A)  {\makecell[t]{$v_{(2,1)}$\\$v_5$}};
\node[state] (C) [above left=of A] {\makecell[t]{$v_{(2,4)}$\\$v_8$}};
\node[state] (B) [above right=of A] {\makecell[t]{$v_{(2,2)}$\\$v_6$}};
\node[state] (D) [above right=of C] {\makecell[t]{$v_{(2,3)}$\\$v_{7}$}};

\node[state2] (E) [above right=of B] {\makecell[t]{$v_{(3,4)}$\\$v_{12}$}};
\node[state2] (F) [above right=of E] {\makecell[t]{$v_{(3,3)}$\\$v_{11}$}};
\node[state2] (G) [below right=of F] {\makecell[t]{$v_{(3,2)}$\\$v_{10}$}};
\node[state2] (H) [below left=of G] {\makecell[t]{$v_{(3,1)}$\\$v_{9}$}};

\node[state3] (I) [above left=of C] {\makecell[t]{$v_{(1,2)}$\\$v_{2}$}};
\node[state3] (N) [above left=of I] {\makecell[t]{$v_{(1,3)}$\\$v_{3}$}};
\node[state3] (M) [below left=of N] {\makecell[t]{$v_{(1,4)}$\\$v_{4}$}};
\node[state3] (L) [below right=of M] {\makecell[t]{$v_{(1,1)}$\\$v_{1}$}};

\path (A) edge [black, bend left=25] node[sloped,midway] {$3$} (C);
\path (A) edge [black, bend right=25] node[sloped,midway] {$1$} (B);
\path (A) edge [black] node[midway] {$2$} (D);

\path (H) edge [black, bend left=25] node[sloped,midway] {$3$} (E);
\path (H) edge [black] node[midway] {$2$} (F);
\path (H) edge [black, bend right=25] node[sloped,midway] {$1$} (G);

\path (L) edge [black, bend right=25] node[sloped,midway] {$1$} (I);
\path (L) edge [black, bend left=25] node[sloped,midway] {$3$} (M);
\path (L) edge [black] node[midway] {$2$} (N);

\draw[-,bicolor={processblue}{ForestGreen},bend right=25]
(A) to node[sloped,midway, below] {$\textcolor{black}{10}$} (H);
\draw[-,bicolor={processblue}{red},bend left=25]
(A) to node[sloped,midway, below] {$\textcolor{black}{10}$} (L);

\draw[-,bicolor={processblue}{ForestGreen},bend left=27]
(B) to node[sloped, above] {$\textcolor{black}{1}$}  (E);
\draw[-,bicolor={red}{processblue},bend left=27]
(I) to node[sloped] {$\textcolor{black}{1}$} (C);
\end{tikzpicture}
\caption{Representation of the $1$-level diamond Toeplitz graph $G = T_{n,4}^M\left\langle (1, L) \right\rangle$ for $n=3$.}\label{fig:diamond_graph}
\end{figure}
In order to study the zeros of the four eigenvalue functions of $\bfff (\theta)$, we numerically checked that $0\le\lambda_1(\bfff (\theta))<\lambda_2(\bfff (\theta))<\lambda_3(\bfff (\theta))<\lambda_4(\bfff (\theta))$ for all $\theta\in [-\pi,\pi]$ and we then computed the determinant, which is equal to $\det(\bfff (\theta))=292 - 292\cos(\theta)$. Hence, we deduce that both the determinant and $\lambda_1(\bfff (\theta))$ have a zero of order 2 in 0. As a consequence, we define a grid transfer operator of the form (\ref{eq:def_projector_pnk}), associated to the following matrix-valued trigonometric polynomial
$$
\mathbf{p} (\theta) = (4+6\cos(\theta)+4\cos(2\theta)+2\cos(3\theta))\begin{bmatrix} 2 & 1 & 1 & 1 \\ 1 & 2 & 1 & 1 \\ 1 & 1 & 2 & 1 \\ 1 & 1 & 1 & 2 \\ \end{bmatrix}.
$$
Indeed, the trigonometric polynomial $2+6\cos(\theta)+4\cos(2\theta))+2\cos(3\theta)$ has zeros of order 2 in $\pi/2$ and $\pi$, which are the mirror points of 0 if we consider a coarsening factor of 4, as in Theorem \ref{thm:toep_approx_prop}. Moreover, the matrix
$$
\begin{bmatrix} 2 & 1 & 1 & 1 \\ 1 & 2 & 1 & 1 \\ 1 & 1 & 2 & 1 \\ 1 & 1 & 1 & 2 \\ \end{bmatrix}
$$
guarantees a constant convergence rate of the V-cycle method, according to the analysis in \cite{multi-block}.

In Table \ref{table:diamond} we numerically show the validity of our proposed methods, reporting the number of iterations needed for achieving the tolerance $\varepsilon = 10^{-6}$ when increasing the matrix size. In this case, we use the relative error stopping criterion, where the exact solution is computed using the LAMG library (v.2.2.1), whose run time and storage were empirically demonstrated to scale linearly with the number of edges \cite{LAMG}.

We use one iteration of Gauss-Seidel both as pre-smoother and post-smoother. In the first and second columns, we show the results relative to the two-grid and V-cycle methods for $\mathfrak{g}=2$, while in the third and fourth columns we consider a coarsening factor $\mathfrak{g}=4$. 

In all cases, we see that the number of iterations needed for convergence remains almost constant, when increasing the size $d_\bfnn$.

\begin{table}[htb]
	\begin{center}
		\begin{tabular}{cccccccccc}
			\toprule
	{$t$}&{$d_\bfnn$}& {Two-grid ($\mathfrak{g}=2$)} & {V-cycle ($\mathfrak{g}=2$)} & {Two-grid ($\mathfrak{g}=4$)}& {V-cycle ($\mathfrak{g}=4)$}\\
			\midrule
			4& 	1016&5&6& 16&16\\
			5& 	4088&5&6&20&22\\
			6& 16376&5&6&19&23\\
			7& 65528&5&6&19&24\\
			8&262136&5&6&20&25\\
			\bottomrule
		\end{tabular}
	\end{center}
	\caption{{Comparison of the number of iterations, varying $t$, for the two-grid and V-cycle methods applied to the graph Laplacian $\Delta_{G_\bfnn}$.}}
	\label{table:diamond}
\end{table}

\subsection{FEM and IgA approximations of elliptic PDEs}

In this subsection we perform some numerical tests concerning examples reported in \cite[Subsections 7.2, 7.3]{twin-theory}. For the sake of completeness we briefly recall the setting.\\

We consider the model problem
\begin{equation}\label{problem}
-\Delta u = g  \text{ in }\ \Omega,
\end{equation}
where $ \Omega=(0,1) $ and $ g \in \text{L}^2(\Omega) $. We first approximate \eqref{problem} by using quadratic FEs over the uniform mesh with stepsize $ \frac{1}{n+1} $, where we choose as FEs basis the quadratic $C^0$ B-spline basis over the knot sequence $ \{ \frac{1}{n+1}, \frac{1}{n+1}, \frac{2}{n+1}, \frac{2}{n+1},..., \frac{n}{n+1}, \frac{n}{n+1} \} $. Fixed $ \bfnn=(n,n) $ and proceeding as in \cite{FEM-paper}, it is possible to show that, in this case, the approximation of the operator $ -n^{-1}\Delta $ is given by the graph Laplacian $ \Delta_{G_{\bfnn}} $ of a $1$-level diamond Toeplitz graph $ T_{\bfnn,2}^{G} \langle \left(1,L_1 \right) \rangle $, where
$$
W= \left( \begin{array}{cc}
0 & 2 \\
2 & 0
\end{array} \right)
$$
is the adjacency matrix of the mold graph $ G $ and
$$
L_1=\left( \begin{array}{cc}
0 & 0 \\
2 & 2
\end{array} \right).
$$

Note that (see \cite{twin-theory}) we have a nonzero potential term which depends on the choice of the boundary conditions.\\
It is then easy to see that the sequence $ \{ \Delta_{G_{\bfnn}} \} $ satisfies
$$
\{ \Delta_{G_{\bfnn}} \} \sim_{\lambda} \boldsymbol{\mathfrak{f}}(\theta),
$$
where $ \boldsymbol{\mathfrak{f}}: [-\pi,\pi] \to \mathbb{C}^{2 \times 2} $ is given by
$$
 \boldsymbol{\mathfrak{f}}(\theta)= \frac{1}{3} \left\{ \left( \begin{array}{cc}
4 & -2 \\
-2 & 8
\end{array} \right) + \left( \begin{array}{cc}
0 & -2 \\
-2 & -4
\end{array} \right) \cos (\theta) + \left( \begin{array}{cc}
0 & -2 \\
2 & 0
\end{array} \right) \i \sin (\theta)  \right\}.
$$

It is now well known from the relevant literature (see \cite{sp-gr,DBFS93,solvers15,solvers-bis15,our-sinum}) that an optimal preconditioner for the linear system
\begin{equation}\label{linear_system}
\Delta_{G_{\bfnn}} u_{\bfnn} = g_{\bfnn}
\end{equation}
is given by the matrix $ S_n = C_n(\boldsymbol{\mathfrak{f}})+ \frac{1}{2n}ee^T $, $ e $ being the column vector of dimension $ 2n $ of all ones. This is numerically confirmed in Table \ref{table:block_PCG_table}.

\begin{table}[H]
	\begin{center}
		\begin{tabular}{cccccccccc}
		\toprule
		{$t$}&{$d_\bfnn$($n=2^t$)}& {CG} & {PCG}\\
		\midrule
		6& 	64&95&4\\
		7& 128&$>100$& 4\\
		8&	 256&$>100$&4\\
		9& 	512&$>100$&5\\
		10& 1024&$>100$&5\\
		11& 2048&$>100$&5\\
		\bottomrule
	\end{tabular}
	\end{center}
	\caption{{Comparison of the number of iterations, varying $t$, by the CG method and the PCG method with  preconditioner $ S_n $ with tolerance $10^{-6}$.}}
	\label{table:block_PCG_table}
\end{table}

If we now consider the discretization of \eqref{problem} arising from an IgA approach as in \cite[Subsection 7.3]{twin-theory} (see also \cite{our-sinum}), the discretizing operator $ \Delta_{G_n} $ is not the graph Laplacian of one of the structures described in Section \ref{sec:intro}; however, the difference is due to the presence of local perturbations near the boundary nodes which do not affect the symbol function. In particular, it happens that the sequence $ \{ \Delta_{G_n} \} $ has the same symbol function as the graph Laplacian of the $1$-level Toeplitz graph
$$
H_n=\left(T_n\left\langle \left(1,\frac{30}{240}\right),\left(1,\frac{48}{240}\right),\left(1,\frac{2}{240}\right)\right\rangle, \kappa\right), \quad \kappa\equiv 0,
$$
that is
$$
\{\Delta_{G_n}\} \sim_{\lambda} \mathfrak{f}(\theta),
$$
with $\mathfrak{f}(\theta)= \frac{160}{240} - \frac{60}{240}\cos(\theta) - \frac{96}{240}\cos(2\theta) - \frac{4}{240}\cos(3\theta)$.
By direct computation, it is easy to see that $ \mathfrak{f} $ has a unique zero of order $2$ at $0$. It follows that, if we choose $ p(\theta)= 2-2\cos(\theta)$, we deduce that $ \frac{\mathfrak{f}(\theta)}{p(\theta)}$ is monotonic decreasing in $[0,\pi]$, globally even, and
$$
\lim_{\theta \to 0} \frac{\mathfrak{f}(\theta)}{p(\theta)} = 1.
$$
The previous formula suggests as preconditioner for the PCG the matrix $ P_n=T_n(2-2\cos(\theta)) $, for which, following the theory developed in \cite{DBFS93}, we have
\begin{eqnarray*}
	\lim_{n \to \infty} \lambda_{\min}(P_n^{-1} A_n) & = & \min_{\theta\in [0,\pi]}  \frac{\mathfrak{f}(\theta)}{p(\theta)}
	=  \frac{\mathfrak{f}(\pi)}{p(\pi)}=\frac{2}{15} \\
	\lim_{n \to \infty} \lambda_{\max}(P_n^{-1} A_n) & = & \sup_{\theta\in [0,\pi]}  \frac{\mathfrak{f}(\theta)}{p(\theta)}
	= \lim_{\theta \to 0} \frac{\mathfrak{f}(\theta)}{p(\theta)} = 1.
\end{eqnarray*}

As we show in Table \ref{table:PCG_table1}, the PCG with preconditioner $ P_n = T_n(2-2\cos(\theta)) $ for the solution of the linear system \eqref{linear_system} is in fact optimal.

\begin{table}[H]
	\begin{center}
		\begin{tabular}{cccccccccc}
			\toprule
			{$t$}&{$d_\bfnn$($n=2^t$)}& {CG} & {PCG}\\
			\midrule
			7& 128&72& 20\\
			8&256 &$>100$&20\\
			9& 	512&$>100$& 21\\
		   10& 	1024&$>100$&21\\
		   11& 2048&$>100$&22\\
		   12& 4096&$>100$&22\\
			\bottomrule
		\end{tabular}
	\end{center}
	\caption{{Comparison of the number of iterations, varying $t$, for the IgA example by the CG method and the PCG method with  preconditioner $ P_n $ with tolerance $10^{-6}$ .}}
	\label{table:PCG_table1}
\end{table}

In what follows, we show an implementation of the MGM with a projector matrix whose choice depends on the analytic properties of the symbol function $ f $. We consider now the same examples as above in this subsection. In particular we emphasize that our analysis allows us to define a projector for which the MGM turns out to be an optimal method  when solving  system \eqref{linear_system}. Further, since the discrete operator $ \Delta_{G_n} $ is HPD, we use the Richardson method both as pre-smoother and post-smoother.
\\
We start with the IgA example and assume that $ n = 2^t-1 $, $ t \geq 1 $. Following the ideas of Subsection \ref{ssec:multigrid}, we can build our projector as $ P_n = T_n(p(\theta)) K_n $, with $ p(\theta) = 2+2 \cos(\theta) $ and $K_n$ the cutting matrix defined as in \eqref{eq:def_cutting_matrix}.
Recall that $ f $ has a unique zero in $ [0,\pi] $ attained at $ 0 $. It follows that the trigonometric polynomial $ p $ defined above is such that
$$
\lim_{x \to 0} \frac{p^2(\pi-x)}{f(x)} < \infty
$$
and
$$
p^2(x) + p^2(\pi-x) > 0 \quad \forall x \in [0,\pi],
$$
where the two conditions above insure the optimality in light of Theorem \ref{thm:toep_approx_prop}. As noted above, we know that $ \Delta_{G_n} $ has not the structure of the graph Laplacian of a Toeplitz graph. We denote by $ R_n$ the difference $ \Delta_{G_n}-\Delta{H_n} $, where $ \Delta{H_n} $ is the graph Laplacian of the Toeplitz graph $H_n$ in the previous subsection, which is a low rank correction. In order to ensure convergence, in numerical examples we use $0<\omega<\frac{2}{M_f+\rho(R_n)}$ as the scalar parameter in Richardson method one iteration of the Richardson method with relaxation parameters $0.7149$ and $1.4299$ respectively, where $M_f\approx 0.5333$ is a maximum value of $f(\theta)$ and $\rho(R_n)\approx 0.8654$ is the spectral radius of $R_n$, thus satisfying condition in Proposition 3.5 in \cite{ST}.  In Table \ref{table1:FD_MGM} we numerically show the validity of our proposed methods.

\begin{table}[H]
	\begin{center}
		\begin{tabular}{cccccccccc}
			\toprule
			{$t$}&{$d_\bfnn$($n=2^t-1$)}& {Two-grid Method}\\
			\midrule
			7& 127&8\\
			8&255 &8\\
			9& 	511&8\\
			10&1023&8\\
			11& 2047&8\\
			12& 4095&8\\
			\bottomrule
		\end{tabular}
	\end{center}
	\caption{Number of iterations, varying $t$, for the two-grid method applied to the IgA example with tolerance $10^{-6}$.}
	\label{table1:FD_MGM}
\end{table}

An analogous study can be carried for the discretization via the FEM approach.\\

In this case we assume $ n=2^{t} $, with $ t $ integer. By direct computation, it is possible to show that the same projector $ P_n $ can be used also in this situation; we only need to pay attention to the dimension of the projector (which is going to be $ 2n \times n $ due to the block structure of the matrix involved) and, as above, we use both as pre-smoother and post-smoother one iteration of the Richardson method with relaxation parameters $0.25$ and $0.50$ respectively. In Table \ref{table2:FD_MGM}, we show the optimality of the considered method.\\

\begin{table}[H]
	\begin{center}
		\begin{tabular}{cccccccccc}
\toprule		
			{$t$}&{$d_\bfnn$($n=2^t$)}& {Two-grid Method}\\
			\midrule
			7& 128&9\\
			8&256 &9\\
			9& 	512&9\\
			10&1024&9\\
			11& 2048&9\\
			12& 4096&9\\
\bottomrule
		\end{tabular}
	\end{center}
	\caption{Number of iterations, varying $t$, for the two-grid method applied to the FEs example with tolerance $10^{-6}$.}
	\label{table2:FD_MGM}
\end{table}

\section{Conclusions}\label{sec:conclusion}

In the present work we have treated sequences of graphs having a grid geometry, with a uniform local structure in a bounded domain $\Omega\subset {\mathbb R}^d$, $d\ge 1$. When $\Omega=[0,1]$, such graphs include the standard Toeplitz graphs and, for $\Omega=[0,1]^d$,  the considered class includes $d$-level Toeplitz graphs. In the general case, the underlying
sequence of adjacency matrices has a canonical eigenvalue distribution, in the Weyl sense, and it has been shown in the theoretical part of this work that we can associate to it a symbol $\boldsymbol{\mathfrak{f}}$, also in the case of variable coefficients in connection with the notion of Generalized Locally Toeplitz sequences.

Here we have given practical evidence that the knowledge of the symbol and of its basic analytical features provided enough information on the eigenvalue structure in terms of  localization, spectral gap, clustering, and global distribution, in order to design efficient numerical methods for the corresponding large linear systems and for approximating in a fast way the eigenvalue of continuous differential operators.  Tests and applications have been taken from the approximation of differential operators via (local) numerical schemes such as Finite Differences (FDs), Finite Elements (FEs), and Isogeometric Analysis (IgA), where the differential operator domains are of non-cartesian nature. 

Nevertheless, more applications can be taken into account, since the results presented here can be applied as well to study the spectral properties of adjacency matrices and Laplacian operators of general large graphs and networks, whenever the involved matrices enjoy a uniform local structure. The extension of the study concerning this last issue will be a specific direction to be investigated in future works.

\end{document}